\newtheorem{thm}{Theorem}[section]
\newtheorem{prop}{Proposition}[section]
\newtheorem{lem}{Lemma}[section] 
\newtheorem{defin}{Definition}[section]
\newcommand{\revA}[1]{{\color{black}#1}}
\newcommand{\revB}[1]{{\color{black}#1}}
\newcommand{\revC}[1]{{\color{black}#1}}
\theoremstyle{plain}
\newtheorem{theorem}{Theorem}[section]
\theoremstyle{definition}
\theoremstyle{remark}
\title[]{Error Estimates for Sparse Tensor Products of B-spline
Approximation Spaces}
\author{Cl\'ement Guillet$^\dagger$}
\date{\today \\ $^\dagger$Concace, Inria Center at the University of Bordeaux, Talence, France. \\ e-mail: \texttt{clement.guillet@inria.fr}}
\begin{document}

\begin{abstract}
This work introduces and analyzes B-spline approximation spaces defined on general geometric domains obtained through a mapping from a parameter domain.
These spaces are constructed as sparse-grid tensor products of univariate spaces in the parameter domain and are mapped to the physical domain via a geometric parametrization.
Both the univariate approximation spaces and the geometric mapping are built using maximally smooth B-splines. We construct two such spaces, employing either the sparse-grid combination technique or the hierarchical subspace decomposition of sparse-grid tensor products, and we prove their mathematical equivalence. Furthermore, we derive approximation error estimates and inverse inequalities that highlight the advantages of sparse-grid tensor products. Specifically, under suitable regularity assumptions on the solution, these spaces achieve the same approximation order as standard tensor product spaces while using significantly fewer degrees of freedom.
Additionally, our estimates indicate that, in the case of non-tensor-product domains, stronger regularity assumptions on the solution, particularly concerning isotropic (non-mixed) derivatives, are required to achieve optimal convergence rates compared to sparse-grid methods defined on tensor-product domains.
\end{abstract}

\maketitle

\noindent\textbf{Keywords:}  sparse grids, combination technique, approximation spaces, error estimates, interpolation, inverse inequality, B-splines, maximal smoothness, isogeometric analysis


\section{Introduction}
Many problems in computational science involve the approximation of high-dimensional functions. However, representing such functions on isotropic, uniformly refined meshes quickly becomes computationally intractable as the dimensionality increases beyond three or four. This phenomenon, known as the curse of dimensionality~\cite{bellman61}, refers to the exponential growth of computational cost with respect to the number of dimensions. Sparse tensor-product constructions, commonly referred to as sparse-grid (SG) methods, were introduced primarily to alleviate this curse by reducing the computational complexity while maintaining high approximation accuracy.

Sparse-grid methods~\cite{bungartz04,garcke13} were originally developed for the interpolation of high-dimensional functions and have since been extended to the approximation of partial differential equations~\cite{griebel92,griebel98,griebel07}. Under the assumption that the solution has bounded mixed derivatives, an accurate approximation, achieving nearly the same order of accuracy as standard tensor-product methods using isotropic uniform meshes can be computed at a reduced cost. 
Convergence rates of sparse-grid approximations using piecewise linear functions are well established~\cite{bungartz04,pflaum99}. Specifically, the $L^2$ error of sparse-grid approximations scales as $\mathcal{O}(h^2|\log h|^{d-1})$, where $h$ denotes the mesh size and $d$ the dimension. Constructing the sparse-grid approximation requires only $\mathcal{O}(h^{-1}|\log h|^{d-1})$ basis functions whereas standard tensor-product approximations require $\mathcal{O}(h^{-d})$ basis functions to achieve an $L^2$ error of $\mathcal{O}(h^2)$. These results were extended to polynomials~\cite{barthelmann00} and piecewise Lagrange polynomials~\cite{Achatz:2003aa, bungartz98a,bungartz98,bungartz04} with an $L^2$ error scaling as $\mathcal{O}(h^{p+1}|\log h|^{d-1})$, for polynomials of degree $p$. Similar results have also been proven for tensor-product operators~\cite{griebel12,griebel13a} and for non-tensor-product operators~\cite{griebel13}.

Sparse-grid methods were initially restricted to generic tensor-product domains. Some early works proposed to use of transfinite interpolation to generate curvilinear domains~\cite{bungartz98,Dornseifer:1996aa}. However, the regularity assumptions on the solution may no longer hold if the geometric mapping is insufficiently smooth, as is the case with transfinite interpolation. More recently, to overcome this limitation, sparse grids were introduced in the context of isogeometric analysis (IGA)~\cite{BECK2018128}, which can be viewed as a natural extension of finite element analysis using high-order, highly regular B-spline or non-uniform rational B-spline (NURBS) basis functions.
The main idea of IGA, initially introduced in~\cite{HUGHES20054135} is to employ the same type of functions, that is B-splines or NURBS, to define both the computational domain and the unknown solution of the differential equations. This approach allows for more accurate representation of complex geometries and enables exact descriptions of common geometries such as circles or ellipses. In~\cite{BECK2018128}, the authors introduce a sparse-grid isogeometric solver based on the so-called sparse-grid combination technique\cite{griebel90}, which consists of forming a linear combination of several inexpensive tensor-product approximations to recover an approximation of nearly the same order as that of full tensor-product discretizations. Numerical experiments demonstrated that the expected convergence rate of $\mathcal{O}(h^{p+1}|\log h|^{d-1})$ is achieved for sparse-grid approximations using NURBS basis functions of degree $p$. 

However, rigorous approximation error estimates for sparse-grid spaces on general, non-tensor-product domains are still lacking. Understanding such approximation properties is of fundamental importance in the context of Galerkin methods applied to the variational formulations of differential equations. Indeed, error estimates in appropriate Sobolev norms guarantee the convergence of the methods through the application of C\'ea's or Strang's lemma. The first error estimates for IGA approximation spaces were derived in the case of $h$-refinement with fixed polynomial degree~\cite{bazilev11}. The case of anisotropic meshes, involving different mesh sizes in each coordinate direction, was addressed in~\cite{veiga12}. This analysis was later extended to include the explicit dependence on the polynomial degree~$p$~\cite{veiga11}, although it did not cover the most relevant case of maximally smooth B-splines. This gap was filled in~\cite{takacs16}, where error estimates and inverse inequalities with constants independent of the spline degree and the mesh size were established for maximally smooth B-splines. \revB{An extension of this result to arbitrary grids with non-uniform spacing was derived in~\cite{sande19}, with a sharper, near-optimal constant. The result was further extended in~\cite{sande19-1,sande22} to splines of arbitrary smoothness defined on isogeometric multi-patch domains.}
 
In this work, we introduce and analyze approximation spaces based on sparse-grid tensor products of B-splines. 
This extends the work of~\cite{takacs16,sande19,sande19-1}, where such estimates were established for univariate B-splines of maximal smoothness and for $d$-dimensional spaces constructed from full tensor products. Here, we generalize these results to sparse-grid tensor-product spaces in the parameter domain and their images in the physical domain via a geometric B-spline mapping. For simplicity, we restrict our analysis to B-spline functions; however, the extension to NURBS functions, as in~\cite{bazilev11}, is straightforward.
Following~\cite{takacs16}, we restrict our analysis to single-patch geometric domains; however, an extension to multi-patch domains, along the lines of~\cite{takacs18,sande19-1}, could also be considered.

We propose two distinct approaches to construct such spaces. The first approach is based on the sparse-grid combination technique, while the second relies on the hierarchical subspace decomposition of sparse-grid tensor products. We show that, despite their different constructions, the two approximation spaces are identical and can thus be considered a single, unified space.  Here, we exclusively focus on the approximation properties of the space, leaving implementation and performance considerations to future work. 



\subsubsection{The Main Results}
We summarize in the following the two main results of this paper, \cref{thm:1} and \cref{thm:3}. 
These results provide \emph{a priori} error estimates and inverse inequalities in Sobolev norms for the sparse-grid approximation spaces, with constants independent of the mesh size $h$, the spline degree $p$, and the B-spline regularity. 

We consider the physical domain constructed by the application of a geometric mapping $\bm{F}$ to a tensor-product parameter domain $\hat{\Omega}:=(0,1)^d$, i.e.,
 \begin{align}
 \Omega := \bm{F}(\hat{\Omega}).
 \end{align}
We denote by $S_{p,h}^{(1)}(\Omega)$ the sparse-grid approximation space of B-splines with mesh size $h=2^{-n}$, where $n\in\mathbb{N}_0$, degree $p\in\mathbb{N}_0$ and maximum regularity $\mathcal{C}^{p-1}(\Omega)$.
\begin{theorem}
\label{thm:1} 
Let $u \in H^{dq}(\Omega)$ with $q \in \mathbb{N}_0$, then for all $r \in \mathbb{N}_0$ satisfying $0 \leq r \leq q \leq p+1$, there exists a spline approximation $u_{p,h}^{(1)}\in S_{p,h}^{(1)}(\Omega)$ such that
\[
|u - u_{p,h}^{(1)}|_{H^{r}(\Omega)} 
\lesssim  h^{\,q-r} |\log h|^{\,d-1} \|u\|_{H^{dq}(\Omega)}.
\]
\end{theorem}
\begin{theorem}
\label{thm:3} 
For any $q\in \mathbb{N}_0$ satisfying $0\leq q \leq p$, the inverse inequality:
\[
|u_{p,h}^{(1)}|_{H^{q}(\Omega)} \lesssim  h^{-q} |\log h|^{d/2} \|u_{p,h}^{(1)}\|_{L^2(\Omega)}
\]
holds true for all spline approximations $u_{p,h}^{(1)}\in \tilde{S}_{p,h}^{q,(1)}(\Omega)$, where $ \tilde{S}_{p,h}^{q,(1)}(\Omega)\subset S_{p,h}^{(1)}(\Omega)$ is a spline subspace defined later in~\cref{sec:inv}.
\end{theorem}

The symbol $\lesssim$ refers to an inequality up to a constant that does not depend on the mesh size $h$ or the spline degree $p$, but may depend on the shape and the size of the domain $\Omega$. The notation $\log := \log_2$ is used throughout the manuscript.

This paper is organized as follows. In \cref{sec:1}, we introduce the main notation related to Sobolev spaces and the associated norms. In \cref{sec:2}, we describe the construction of regular approximation spaces based on univariate B-splines and their extension to anisotropic $d$-dimensional splines via tensor products. In \cref{sec:3}, we present the sparse-grid approximation spaces in tensor-product domains. In~\cref{sec:iga}, we extend the results to more general geometric domains \revA{and discuss the complexity and computational properties of the approximation spaces}. \revC{All proofs are provided in the appendix, in~\cref{apd:1}, to improve the readability of the paper.}

\section{Preliminaries}
In this preliminary section, we introduce the main notations for spaces and norms that will be used in our analysis.
\label{sec:1}
\subsection{Sobolev spaces and norms}

Let $d\in \mathbb{N}$ denote the dimension of the physical domain of interest, which we take as a generic open set $\Omega \subset \mathbb{R}^d$. We denote by $W^{q,p}(\Omega)$ the standard $L^p$ Sobolev space of order $q \in \mathbb{N}_0$ with $p \in [1, +\infty]$, and by $H^q(\Omega)$ when $p=2$. Specifically, these spaces are defined as
\begin{align*}
&W^{q,p}(\Omega) := \left\{u: \Omega \rightarrow \mathbb{R} \;\middle|\; D^{\bm{\alpha}}u \in L^p(\Omega), ~~ |\bm{\alpha}|_1\leq q \right\}, \\
&H^q(\Omega) := W^{q,2}(\Omega).
\end{align*}
In the above definitions, the derivative operator \revB{is defined, for multi-indices $\bm{\alpha} = (\alpha_1,\ldots,\alpha_d) \in \mathbb{N}^d$, by
\[
D^{\bm{\alpha}}u := \frac{\partial^{\alpha_1}}{\partial x_1^{\alpha_1}}\cdots \frac{\partial^{\alpha_d}}{\partial x_d^{\alpha_d}}u,
\]}
For a smooth vector function $\bm{F} = (F_1, \ldots, F_d) \in (\mathcal{C}^\infty(\Omega))^d$ \revB{and multi-indices $\bm{\alpha} = (\alpha_1,\ldots,\alpha_d) \in \mathbb{N}^d$, 
we introduce the multi-derivative tensor of $\bm{F}$, defined componentwise by
\[
(\bm{\nabla}^{\bm{\alpha}} \bm{F})_{i} :=D^{\bm{\alpha}} F_i,
\quad i = 1,\ldots,d.
\]}
We define the inner product on $H^q(\Omega)$ for $u,v \in H^q(\Omega)$ by
\[
(u,v)_{H^q(\Omega)} 
:= \sum_{|\bm{\alpha}|_1 \le q} \int_{\Omega} D^{\bm{\alpha}} u(\bm{x}) \, D^{\bm{\alpha}} v(\bm{x}) \, d\bm{x}, \quad \text{where}~ |\bm{\alpha}|_1 := \sum_{i=1}^d |\alpha_i|,
\]
and the associated seminorm and norm, respectively, by
\begin{align*}
|u|^2_{H^q(\Omega)} 
:= \sum_{|\bm{\alpha}|_1 = q} \| D^{\bm{\alpha}} u \|_{L^2(\Omega)}^2, \quad
\|u\|^2_{H^q(\Omega)} 
:= (u,u)_{H^q(\Omega)}.
\end{align*}
We define the $L^2$-based Sobolev space with dominating mixed smoothness of order $q \in \mathbb{N}_0$, also called the mixed Sobolev space, by
\[
H_{\mathrm{mix}}^q(\Omega) := \left\{u: \Omega \rightarrow \mathbb{R} \;\middle|\; D^{\bm{\alpha}}u \in L^2(\Omega), ~~ |\bm{\alpha}|_\infty\leq q \right\}, \quad \text{where}~ |\bm{\alpha}|_{\infty}:=\max_{i=1,\ldots,d}  |\alpha_i|.
\]
The associated seminorm and norm are defined by
\begin{align*}
|u|^2_{H_{\mathrm{mix}}^q(\Omega)} := \sum_{|\bm{\alpha}|_\infty=q} \| D^{\bm{\alpha}} u \|_{L^2(\Omega)}^2, \quad 
\|u\|^2_{H_{\mathrm{mix}}^q(\Omega)} 
:=\sum_{m=0}^{q} |u|_{H_{\mathrm{mix}}^m(\Omega)}^2.
\end{align*}
The standard Sobolev space is a subspace of the mixed Sobolev space, and the following relation holds between the full norms:
\begin{align}
\label{eq:13}
\|u\|_{H^{q}(\Omega)} \le \|u\|_{H_{\mathrm{mix}}^q(\Omega)} \le \|u\|_{H^{dq}(\Omega)}.
\end{align}


\section{Regular tensor-product approximation spaces}
\label{sec:2}
In this section, we introduce finite-dimensional approximation spaces
\[
S_{p,h_{\bm{\ell}}}(\hat{\Omega}) \subset H^q(\hat{\Omega})
\]
defined on the tensor-product domain $\hat{\Omega}=(0,1)^d$. These spaces provide increasingly accurate approximations in \(H^q(\hat{\Omega})\) as \(h_{\bm{\ell}} \to 0\) and will later serve as building blocks for the construction of sparse-grid approximation spaces. 

\subsection{Parameter space meshes}
We consider a family of meshes $\{\hat{\Omega}_h\}_h$ on a tensor-product parameter domain $\hat{\Omega}:=(0,1)^d$, where $h$ denotes the family index and represents the global mesh size. 
In this work, we \revB{restrict to uniform grid spacing and} consider a dyadic subdivision of the domain. Let $\bm{\ell}=(\ell_1,\ldots, \ell_d) \in \mathbb{N}^d$ be a multi-index specifying the mesh level in each coordinate direction. The corresponding mesh sizes are given by the $d$-dimensional vector
\[
h_{\bm{\ell}} := (2^{-\ell_1}, \ldots, 2^{-\ell_d}) \in \mathbb{R}^d. 
\]
These mesh sizes partition the $d$-dimensional parameter domain into elements constructed by tensor products of unidimensional dyadic meshes:
\[
\hat{\Omega}_{h_{\bm{\ell}}} 
:= \bigotimes_{i=1}^d \hat{\Omega}_{h_{\ell_i}}, 
\quad \text{where} \quad
\hat{\Omega}_{h_{\ell_i}} := \bigcup_{j=0}^{h_{\ell_i}^{-1}-1} \bigl( h_{\ell_i} j, \, h_{\ell_i} (j+1) \bigr).
\]
The family $\{\hat{\Omega}_{h_{\bm{\ell}}}\}_{\bm{\ell}}$ consists of anisotropic meshes indexed by the mesh levels $\bm{\ell}$, where we allow the mesh size to vary along different directions. This flexibility is essential for defining sparse-grid approximations.

\subsection{Univariate B-splines spaces}
In this section, we consider the unidimensional case corresponding to $d=1$, i.e., $\hat{\Omega}=(0,1)$. The mesh level is denoted by $\ell\in \mathbb{N}$.  Let $\mathbb{P}^p$ be the space of polynomials of degree $p \in \mathbb{N}_0$ and \revB{$\mathcal{C}^k(\hat{\Omega})$ the space of functions with continuous derivatives of
order $0,1, \ldots,k$ on $\hat{\Omega}$, with $k\in \mathbb{N}_0$.}
\begin{defin}
The space of univariate B-splines is defined by
\[
S_{p,k,h_\ell}(\hat{\Omega}) := \left\{ u \in \revB{\mathcal{C}^k(\hat{\Omega})} \;\middle|\; 
u|_{(h_\ell j,\, h_\ell (j+1))} \in \mathbb{P}^p, \ \forall j = 0,\ldots,h_\ell^{-1}-1 \right\},
\]
where the parameter $k$, with $0 \leq k \leq \revB{p-1}$, specifies the spline regularity.  
\end{defin}

We restrict ourselves to the case of B-splines with maximal smoothness, i.e., \revB{$k = p-1$}, and denote
\[
S_{p,h_\ell}(\hat{\Omega}) := S_{p,p-1,h_\ell}(\hat{\Omega}).
\]

For $r\in\mathbb{N}_0$ verifying $0\leq r\leq p+1$, we introduce the seminorm $H^r$-orthogonal projection operator onto the spline space and its complementary operator:
\begin{align*}
&\pi_{p,h_\ell} : H^r(\hat{\Omega}) \rightarrow  S_{p,h_\ell}(\hat{\Omega}), \\
& (\operatorname{I}-\pi_{p,h_\ell}) : H^r(\hat{\Omega}) \rightarrow  H^r(\hat{\Omega})/S_{p,h_\ell}(\hat{\Omega}),
\end{align*}
defined such that
\[
|(\operatorname{I}-\pi_{p,h_\ell})\, u |_{H^r(\hat{\Omega})} = \min_{v \in S_{p,h_\ell}(\hat{\Omega})} |u - v |_{H^r(\hat{\Omega})}.
\]

The main result of this section, originally presented in~\cite[Theorem 7.3]{takacs16}, \revB{further extended in~\cite[Theorem 3.1]{sande19} to arbitrary grids and improved with a sharper constant}, provides an estimate of the approximation error in the $H^r$ seminorm for B-spline spaces up to maximum regularity, with a constant independent of both the spline degree and the mesh size. 
We recall this result here, expressed with our notations. 
\begin{lem}
\label{lem:2}
Let $u \in H^q(\hat{\Omega})$ with $q \in \mathbb{N}_0$. For all mesh level $\ell \in \mathbb{N}$, each spline degree $p \in \mathbb{N}_0$, and any $r \in \mathbb{N}_0$ satisfying $0 \leq r \leq q \leq p+1$, the following estimate holds true:
\[
|(\operatorname{I} - \pi_{p,h_\ell}) \,u|_{H^r(\hat{\Omega})} \leq C_1(q,r)\, h_\ell^{q-r} \, |u|_{H^q(\hat{\Omega})}, \quad \text{with}~~ C_1(q,r)= \revB{\left(\frac{1}{\pi}\right)^{q-r}}.
\]
\end{lem}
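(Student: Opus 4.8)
The plan is to follow the argument of~\cite{takacs16}; its structure is as follows. The first step eliminates the seminorm order~$r$. Because the B-splines are maximally smooth, differentiation maps $S_{p,h_\ell}(\hat\Omega)$ \emph{onto} $S_{p-1,h_\ell}(\hat\Omega)$ --- the $r$-fold antiderivative of a degree-$(p-r)$, $\mathcal C^{\,p-r-1}$ spline is a degree-$p$, $\mathcal C^{\,p-1}$ spline --- so $D^{r}$ maps $S_{p,h_\ell}(\hat\Omega)$ onto $S_{p-r,h_\ell}(\hat\Omega)$ for every $r\le p$. Since in one dimension $|w|_{H^{r}(\hat\Omega)}=\|D^{r}w\|_{L^{2}(\hat\Omega)}$ and $\pi_{p,h_\ell}$ is the $H^{r}$-seminorm-orthogonal projection, this gives the exact identity
\[
|(\operatorname{I}-\pi_{p,h_\ell})\,u|_{H^{r}(\hat\Omega)}
=\min_{v\in S_{p,h_\ell}(\hat\Omega)}\|D^{r}u-D^{r}v\|_{L^{2}(\hat\Omega)}
=\operatorname{dist}_{L^{2}(\hat\Omega)}\!\bigl(D^{r}u,\,S_{p-r,h_\ell}(\hat\Omega)\bigr).
\]
It therefore suffices to prove the pure $L^{2}$-projection estimate
\[
\operatorname{dist}_{L^{2}(\hat\Omega)}\!\bigl(g,\,S_{m,h_\ell}(\hat\Omega)\bigr)\ \le\ (\sqrt2)^{\,s}\,h_\ell^{\,s}\,|g|_{H^{s}(\hat\Omega)}
\]
for all $g\in H^{s}(\hat\Omega)$ with $0\le s\le m+1$, uniformly in the degree $m$ and the mesh size $h_\ell$, and then apply it with $g=D^{r}u$, $m=p-r$, $s=q-r$ (so $s\le p+1-r=m+1$ and $h_\ell m<1$). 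The endpoint configurations are immediate: $q=0$ and $r=q$ are trivial, $s=0$ follows from $\operatorname{dist}_{L^{2}}\le\|\cdot\|_{L^{2}}$, and $r=p+1$ forces $q=p+1$ with $D^{p+1}$ annihilating $S_{p,h_\ell}(\hat\Omega)$.

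For the $L^{2}$-projection estimate the strategy is to pass from the interval to a translation-invariant periodic setting on the doubled cell $(0,2)$. First one extends $g$ to a $2$-periodic function $\tilde g$ on $\mathbb{R}$ that restricts to $g$ on $(0,1)$ and satisfies the seminorm bound $|\tilde g|_{H^{s}((0,2))}\le\sqrt2\,|g|_{H^{s}((0,1))}$ --- e.g.\ by a reflection tailored to the derivative order, so that each unit copy carries the same value of $\|D^{s}\cdot\|_{L^{2}}$; this is the step responsible for the factor $\sqrt2$. The dyadic mesh of $(0,1)$ extends to a uniform dyadic mesh of $(0,2)$ of the same mesh size, and the periodic spline space $S^{\mathrm{per}}_{m,h_\ell}((0,2))$ restricts into $S_{m,h_\ell}((0,1))$, whence $\operatorname{dist}_{L^{2}((0,1))}(g,S_{m,h_\ell})\le\operatorname{dist}_{L^{2}((0,2))}(\tilde g,S^{\mathrm{per}}_{m,h_\ell})$. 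On a uniform periodic mesh the $L^{2}$-orthogonal projection is block-diagonalized by the discrete Fourier transform, decoupling into one elementary problem per residue class of frequencies; using the explicit Fourier transform of the cardinal B-spline of degree $m$ together with Euler--Frobenius-type two-sided bounds for the symbol $\sum_{j}|\widehat\beta_{m}(\xi+2\pi j)|^{2}$ of its Gram matrix, one bounds each contribution by a $(\pi h_\ell|k|/2)^{2s}$-type factor. Summing against $|\tilde g|_{H^{s}((0,2))}^{2}$ yields $\operatorname{dist}_{L^{2}((0,2))}(\tilde g,S^{\mathrm{per}}_{m,h_\ell})\le C\,h_\ell^{\,s}\,|\tilde g|_{H^{s}((0,2))}$ with $C$ independent of $m$ and $h_\ell$, and combining with the reflection bound gives the stated constant $(\sqrt2)^{\,q-r}$.

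The step I expect to be the main obstacle is precisely this degree-robustness of the periodic estimate: generic spline interpolation and quasi-interpolation constants deteriorate as $m\to\infty$, and it is the special choice of the \emph{$L^{2}$-orthogonal} projection onto \emph{maximally smooth} splines that makes the relevant Fourier symbols admit two-sided bounds uniform in $m$ --- this is the technical core of~\cite{takacs16}. Some care is also needed in the extension step to produce a periodic extension whose $H^{s}$ \emph{seminorm} (not merely its full norm) is controlled with the clean constant $\sqrt2$. By contrast, the surjectivity of $D^{r}$ on maximal-smoothness spline spaces, which drives the reduction to $r=0$, is exact and costs nothing in the constant.
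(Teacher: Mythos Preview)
The paper's proof is two lines: quote \cite[Theorem~7]{takacs16} to get \emph{some} $u_{p,h_\ell}\in S_{p,h_\ell}(\hat\Omega)$ with $|u-u_{p,h_\ell}|_{H^r}\le(\sqrt2\,h_\ell)^{q-r}|u|_{H^q}$, then observe that the $H^r$-seminorm projection can only do better. What you have sketched is instead a proof of the cited theorem itself, and your outline is structurally correct --- the reduction to $r=0$ via surjectivity of $D^r$ on the maximally smooth spline scale, followed by a degree-robust periodic $L^2$ estimate obtained through Fourier symbol bounds for the cardinal B-spline, is precisely the machinery of \cite{takacs16}. One caveat on the constant: a plain reflection $\tilde g(1+t)=g(1-t)$ lands in $H^1$ across the seam but generically not in $H^s$ for $s\ge2$, so a one-shot $H^s\to L^2$ periodic estimate would need a higher-order (Hestenes-type) extension, for which the clean factor $\sqrt2$ in the seminorm is no longer automatic. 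In \cite{takacs16} the specific value $(\sqrt2)^{q-r}$ in fact arises \emph{iteratively}: the simple reflection suffices for the one-step bound $\operatorname{dist}_{L^2}(g,S_{m,h_\ell})\le\sqrt2\,h_\ell\,|g|_{H^1}$, and one then lowers $q$ by one at a time, picking up exactly one factor $\sqrt2\,h_\ell$ per step. Your one-shot variant would still prove the stated inequality (the periodic constant is comfortably below $\sqrt2$), just not with that particular constant on the nose.
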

\revB{A similar result was initially introduced in~\cite[Theorem 7.3]{takacs16} with a constant independent of the spline degree, given by
\[
C_1^\star(q,r) = (\sqrt{2})^{q-r},
\] 
and under the assumption of sufficiently fine uniform grids, i.e., $hp<1$.}
Note that, since
\[
S_{p,h_\ell}(\hat{\Omega}) \subseteq S_{p,k,h_\ell}(\hat{\Omega}), 
\]
for all $0 \leq k < \revB{p-1}$, \cref{lem:2} is also valid in that case.

We also introduce, as in~\cite[Theorem 7.1]{takacs16}, \revB{and~\cite[Section 8]{sande19}}, a suitable subspace of $S_{p,h_{\ell}}(\hat{\Omega})$ to derive inverse inequalities.
\begin{defin}
\label{def:3}
The space of $q$-vanishing derivative univariate B-splines is defined by
\[
\tilde{S}_{p,h_{\ell}}^{q}(\hat{\Omega}) := \left\{ u_{p,h_{\ell}}\in S_{p,h_{\ell}}(\hat{\Omega}) \; \middle|\;  \frac{\partial^{2l+q}}{\partial x^{2l+q}}u_{p,h_{\ell}}(0)=\frac{\partial^{2l+q}}{\partial x^{2l+q}}u_{p,h_{\ell}}(1)=0 \quad \text{for all} \; l\in \mathbb{N}_0 \; \text{with}~2l+q<p\right \}
\]
\end{defin}
Note that according to \cite[Theorem 7.1]{takacs16}, \cref{lem:2} also holds for the space $\tilde{S}_{p,h_{\ell}}^{q}(\hat{\Omega})$. The following inverse inequality is a result from~\cite[Theorem 7.2]{takacs16}, \revB{or~\cite[Theorem 9.1]{sande19}.}
\begin{lem}
\label{lem:12}
For all mesh levels $\ell \in \mathbb{N}$, each $p \in \mathbb{N}_0$ and each $q \in \mathbb{N}_0$ with $0 \leq q \leq p$, 
\[
|u_{p,h_\ell}|_{H^q(0,1)} \leq C_2(q)h_\ell^{-q}\|u_{p,h_\ell}\|_{L^2(0,1)}
\]
is satisfied for all spline approximation $u_{p,h_\ell}\in \tilde{S}_{p,h_{\ell}}^{q}(\hat{\Omega})$
with $C_2(q) = (2\sqrt{3})^q$.
\end{lem}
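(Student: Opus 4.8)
The plan is to establish the inverse inequality \cref{lem:12} for the univariate vanishing-derivative spline space $\tilde{S}_{p,h_\ell}^{(q)}(\hat{\Omega})$ by reducing it, via an iterated one-derivative estimate, to a scale-invariant $L^2$-to-$H^1$ bound on a single knot interval, following the strategy of~\cite{takacs16}, Theorem 6. Since the paper explicitly says this is a recalled result from~\cite{takacs16}, I would keep the argument short, but the natural self-contained route is as follows.

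First I would reduce to the case $q=1$ by induction. Observe that for $u_{p,h_\ell} \in \tilde{S}_{p,h_\ell}^{(q)}(\hat{\Omega})$, its derivative $\partial_x u_{p,h_\ell}$ lies in the space $S_{p-1,h_\ell}(\hat{\Omega})$ of one lower degree and, crucially, the vanishing boundary conditions in \cref{def:3} are arranged precisely so that $\partial_x u_{p,h_\ell} \in \tilde{S}_{p-1,h_\ell}^{(q-1)}(\hat{\Omega})$: the condition ``$\partial^{2l+q} u(0)=\partial^{2l+q} u(1)=0$ for $2l+q<p$'' on $u$ becomes ``$\partial^{2l+(q-1)} v(0)=\partial^{2l+(q-1)} v(1)=0$ for $2l+(q-1)<p-1$'' on $v=\partial_x u$. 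Hence $|u_{p,h_\ell}|_{H^q} = |\partial_x u_{p,h_\ell}|_{H^{q-1}}$, and applying the inductive hypothesis to $\partial_x u_{p,h_\ell}$ in degree $p-1$ gives $|u_{p,h_\ell}|_{H^q} \le C_2(q-1) h_\ell^{-(q-1)} \|\partial_x u_{p,h_\ell}\|_{L^2}$. It then remains to bound $\|\partial_x u_{p,h_\ell}\|_{L^2} \le C\, h_\ell^{-1}\|u_{p,h_\ell}\|_{L^2}$ with $C = 2\sqrt{3}$, so that $C_2(q) = 2\sqrt3\cdot C_2(q-1)$, consistent with $C_2(q)=(2\sqrt3)^q$ and $C_2(0)=1$.

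The base case $q=1$ is the crux. On each dyadic subinterval $(h_\ell j, h_\ell(j+1))$ the function is a polynomial of degree $\le p$, and a classical polynomial inverse (Markov-type) estimate on an interval of length $h_\ell$ gives $\|\partial_x u\|_{L^2(h_\ell j, h_\ell(j+1))}^2 \lesssim h_\ell^{-2}\|u\|_{L^2(h_\ell j, h_\ell(j+1))}^2$ — but a naive Markov inequality produces a constant growing like $p^2$, which is forbidden. The point of the restriction to $\tilde{S}_{p,h_\ell}^{(1)}$ (i.e. odd derivatives vanishing at the two endpoints of $(0,1)$) is that it makes $\partial_x$, viewed as an operator on the whole space, symmetric with respect to the $L^2$ inner product after the relevant integrations by parts, so its $L^2 \to L^2$ operator norm equals its largest eigenvalue; one then diagonalizes using the eigenfunctions of the associated boundary value problem (on a single reference interval, by scaling) and computes the sharp constant $2\sqrt3$ explicitly. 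Concretely I would rescale to the reference interval $(0,1)$, set $\hat h = h_\ell$, and invoke the sharp one-dimensional estimate of~\cite{takacs16}, Theorem 6, which provides exactly the constant $C_2(1)=2\sqrt3$ uniformly in $p$; combined with the induction above this yields the claim.

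The main obstacle is precisely obtaining the $p$-independent constant in the base case: an elementwise argument via standard polynomial inequalities is easy but gives a degree-dependent constant, so one must exploit the global structure — the maximal smoothness across knots together with the vanishing-derivative boundary conditions — to get a self-adjoint operator and argue spectrally. Since \cref{lem:2} is stated to hold on $\tilde{S}_{p,h_\ell}^{(q)}(\hat{\Omega})$ as well (per the remark after \cref{def:3}), all approximation-side ingredients are already available; the inverse estimate is the only genuinely new computation, and its proof is the verbatim content of~\cite{takacs16}, Theorem 6, so in the paper I would simply cite it and sketch the reduction to $q=1$ as above.
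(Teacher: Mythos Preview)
Your proposal is correct and follows essentially the same approach as the paper: the paper's entire proof reads ``From~\cite{takacs16}, Theorem 6, by induction, we obtain the result,'' which is precisely your strategy of invoking the $q=1$ inverse estimate from~\cite{takacs16} and iterating via the observation that differentiation maps $\tilde{S}_{p,h_\ell}^{(q)}$ into $\tilde{S}_{p-1,h_\ell}^{(q-1)}$. Your additional discussion of the spectral mechanism behind the $p$-independent constant is helpful context but goes beyond what the paper records.
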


\subsection{Anisotropic $d$-dimensional B-spline spaces}
We extend the results of the previous section to anisotropic meshes in multiple dimensions, where $d\geq 2$. The tensor-product parameter domain is $\hat{\Omega}=(0,1)^d$. 

\begin{defin}
Let $\bm{\ell}=(\ell_1,\ldots,\ell_d) \in \mathbb{N}^d$ be the mesh level in each direction. The anisotropic $d$-dimensional B-spline space is defined by the tensor products of the univariate spaces:
\[
S_{p,h_{\bm{\ell}}}(\hat{\Omega}) := \bigotimes_{i=1}^d S_{p,h_{\ell_i}}(0,1).
\]
\end{defin}
Here, we consider B-splines of the same degree $p$ in each direction, while allowing the mesh size to vary along different directions. 

To extend the notion of projection introduced in the previous section to this $d$-dimensional space, we begin by defining the projection along a single direction.
\begin{defin}
The $H^r$-orthogonal projection onto the spline space $S_{p,h_{\ell_i}}(0,1)$ in the single direction $i\in \{1,\ldots,d\}$ is defined by
\[
\Pi_{p,h_{\ell_{i}}} := \operatorname{I} \otimes \cdots \otimes  \pi_{p,h_{\ell_i}} \otimes  \cdots \otimes \operatorname{I} 
\]
\end{defin} 
Note that the unidimensional projections are commutative, i.e., for $i_1,i_2\in \{1,\ldots,d\}$, 
\[
\Pi_{p,h_{\ell_{i_1}}} \circ \Pi_{p,h_{\ell_{i_2}}} = \Pi_{p,h_{\ell_{i_2}}} \circ \Pi_{p,h_{\ell_{i_1}}}.
\]

We introduce the projection onto the B-spline space in a subset of dimension indices by successively applying the unidimensional projection.
\begin{defin}
Let $k\in\mathbb{N}_0$ with $0 \leq k \leq d$ and $J \subset \{1,\dots,d\}$ be a subset of cardinality $|J|=k$.  The $k$-dimensional projection onto the B-spline space in the directions $J$ is defined by 
\[
\Pi_{p,h_{\bm{\ell}}}^{\,J} u 
  \;:=\;
  \Bigl(\,\prod_{i\in J}^{\circ}\Pi_{p,h_{\ell_i}}\,\Bigr) u.
\]
We also define the complementary projection by:
\[
(\operatorname{I}-\Pi_{p,h_{\ell}})^{\,J} u
  \;:=\;
  \Bigl(\,\prod_{i\in J}^{\circ} \bigl(\operatorname{I}-\Pi_{p,h_{\ell_i}}\bigr)\,\Bigr) u.
\]
\end{defin}
Here the notation $\prod_{i\in J}^{\circ}$ indicates composition of the operators with respect to the indices in $J$, the order being irrelevant since the operators commute. 

Based on these definitions, the projection of a function $u$ onto the $d$-dimensional spline space $S_{p,h_{\bm{\ell}}}(\hat{\Omega})$ can be defined as the composition of the univariate $H^r$-orthogonal projections applied successively along each coordinate direction, i.e.,
\begin{align}
\label{eq:15}
\Pi_{p,h_{\bm{\ell}}} u := \Pi_{p,h_{\bm{\ell}}}^{\{1,\ldots,d\}} u.
\end{align}

We introduce a first error estimate for the $d$-dimensional B-spline spaces. \revB{This result is a direct consequence of~\cite[Theorem 8.1]{takacs16}, or~\cite[Corollary 3.1]{sande19}, extended to spaces with anisotropic mesh sizes.}
\begin{prop}
\label{prop:3}
Let $u \in H^q_{\mathrm{mix}}(\hat{\Omega})$ with $q \in \mathbb{N}_0$. For all mesh levels $\bm{\ell} \in \mathbb{N}^d$, each $p\in\mathbb{N}_0$, and any $r \in \mathbb{N}$ such that $0 \leq r \leq q \leq p+1$,
\[
\bigl\|(\operatorname{I}-\Pi_{p,h_{\bm{\ell}}}) \,u
\bigr\|_{H^r(\hat{\Omega})}
\leq
C_3(d,q,r)
\Big(\sum_{i=1}^d h_{\ell_i}^{q-r}\Bigr)
\|u\|_{H^q(\hat{\Omega})}
\]
is satisfied with $\displaystyle C_3(d,q,r)=(r+1)^{d/2} \left(\revB{\frac{1}{\pi}}\right)^{q-r}$.
\end{prop}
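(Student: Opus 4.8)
The strategy is to decompose the total projection error $(\operatorname{I}-\Pi_{p,h_{\bm\ell}})u$ into a telescoping sum of one-directional errors, then bound each term using the univariate estimate of \cref{lem:2}, and finally sum up. The key algebraic identity is the operator factorization
\[
\operatorname{I}-\Pi_{p,h_{\bm\ell}} = \operatorname{I} - \prod_{i=1}^d{}^{\circ}\,\Pi_{p,h_{\ell_i}}
= \sum_{i=1}^d \Bigl(\prod_{j<i}{}^{\circ}\Pi_{p,h_{\ell_j}}\Bigr)\circ (\operatorname{I}-\Pi_{p,h_{\ell_i}}),
\]
which follows from the standard telescoping $\operatorname{I}-A_1\cdots A_d = \sum_i A_1\cdots A_{i-1}(\operatorname{I}-A_i)$ together with the commutativity of the $\Pi_{p,h_{\ell_i}}$ noted in the excerpt. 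By the triangle inequality in $H^r(\hat\Omega)$ it then suffices to bound each summand
\[
\Bigl\|\Bigl(\prod_{j<i}{}^{\circ}\Pi_{p,h_{\ell_j}}\Bigr)(\operatorname{I}-\Pi_{p,h_{\ell_i}})u\Bigr\|_{H^r(\hat\Omega)}.
\]

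First I would dispose of the outer projections $\prod_{j<i}^{\circ}\Pi_{p,h_{\ell_j}}$: each univariate $\pi_{p,h_{\ell_j}}$ is the $H^r$-seminorm orthogonal projection in direction $j$, so it is a contraction in the relevant one-dimensional Sobolev (semi)norm; tensoring with identities in the other directions and composing, one gets that $\prod_{j<i}^{\circ}\Pi_{p,h_{\ell_j}}$ is bounded on $H^r(\hat\Omega)$ with a norm controlled by a constant depending only on $d$ and $r$ — this is where the factor $(r+1)^{d/2}$ enters, since passing from a seminorm bound to the full $H^r$-norm bound costs a factor that accumulates over directions. Concretely, for each fixed multi-index $\bm\alpha$ with $|\bm\alpha|_1=m\le r$, $D^{\bm\alpha}$ commutes with the directional projections in the usual way and $\|D^{\bm\alpha}\Pi_{p,h_{\ell_j}}w\|_{L^2}\le\|D^{\bm\alpha}w\|_{L^2}$ (stability of the 1D $H^{\alpha_j}$-projection, using $\alpha_j\le r\le p+1$); summing over the $\le (r+1)$ choices of $\alpha_j$ in each of the $<i\le d$ directions yields the $(r+1)^{d/2}$.

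Next, for the inner factor $(\operatorname{I}-\Pi_{p,h_{\ell_i}})u$, I would apply \cref{lem:2} directionwise. Because $\operatorname{I}-\Pi_{p,h_{\ell_i}}$ acts only in variable $x_i$, for each $\bm\alpha$ with $|\bm\alpha|_1=r$ we can integrate over the other $d-1$ variables and use the one-dimensional estimate in $x_i$ applied to the function $D^{\bm\alpha'}u$ where $\bm\alpha'$ agrees with $\bm\alpha$ off coordinate $i$ and has $\alpha_i$ lowered appropriately — carrying $r-\alpha_i$ derivatives through gives a factor $h_{\ell_i}^{q'-r'}$ with the right powers so that after Fubini the contribution is $\lesssim (\sqrt2)^{q-r} h_{\ell_i}^{q-r}\|u\|_{H^q(\hat\Omega)}$; here the hypothesis $u\in H^q_{\mathrm{mix}}$ (or even just the stated $H^q$ membership, which suffices since only mixed-in-one-direction derivatives up to total order $q$ are used) guarantees the needed regularity, and $h_{\ell_i}p<1$ is automatically harmless since the constant in \cref{lem:2} is $p$-independent. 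Summing the $d$ terms produces the factor $\sum_{i=1}^d h_{\ell_i}^{q-r}$.

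\textbf{Main obstacle.} The routine-looking but genuinely delicate point is the careful bookkeeping of which Sobolev order is used in the one-dimensional estimates at each step: in the inner factor one must distribute the $r$ (resp. $q$) derivatives between direction $i$ and the remaining directions so that \cref{lem:2}'s constraint $0\le r'\le q'\le p+1$ is respected for the 1D function, and in the outer projections one must verify $H^{\alpha_j}$-stability for every $\alpha_j$ appearing, not merely for order $r$. Keeping these exponents consistent through the Fubini argument — and checking that the worst case really does give exactly $h_{\ell_i}^{q-r}$ with no loss — is the crux; once the decomposition and the stability/approximation 1D facts are lined up correctly, the constant $C_3(d,q,r)=(r+1)^{d/2}(\sqrt2)^{q-r}$ drops out by simply multiplying the per-term bounds.
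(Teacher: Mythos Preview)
Your telescoping decomposition $\operatorname{I}-\prod_i^\circ\Pi_{p,h_{\ell_i}}=\sum_i\bigl(\prod_{j<i}^\circ\Pi_{p,h_{\ell_j}}\bigr)(\operatorname{I}-\Pi_{p,h_{\ell_i}})$ followed by the univariate estimate of \cref{lem:2} in each direction is the standard route, and it is precisely what the paper invokes: its proof is a one-line reference to \cite{takacs16}, Theorem~8, with the isotropic mesh size there replaced by the direction-dependent $h_{\ell_i}$. So the approaches coincide.

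One caution on the point you already flag as the main obstacle. Your parenthetical ``stability of the 1D $H^{\alpha_j}$-projection, using $\alpha_j\le r$'' does not justify $\|D^{\bm\alpha}\Pi_{p,h_{\ell_j}}w\|_{L^2}\le\|D^{\bm\alpha}w\|_{L^2}$ as written: by the paper's definition $\pi_{p,h_{\ell_j}}$ is the $H^r$-\emph{seminorm} orthogonal projection for the \emph{fixed} $r$ of the proposition, not the $H^{\alpha_j}$-projection, and an $H^r$-orthogonal projection is a contraction in $|\cdot|_{H^r}$ but has no automatic contraction property in $|\cdot|_{H^{\alpha_j}}$ for $\alpha_j\neq r$. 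In \cite{takacs16} this is handled not through orthogonality but because the concrete 1D approximant underlying \cref{lem:2} satisfies the error bound simultaneously for the whole range $0\le r'\le q'\le p+1$ with the \emph{same} spline; that simultaneous property (take $q'=r'=\alpha_j$ for stability of the outer factors, and an appropriate pair for the inner $(\operatorname{I}-\Pi_{p,h_{\ell_i}})$) is what makes your Fubini bookkeeping close. Replace the orthogonality heuristic by this simultaneous-approximation fact and the argument goes through with the stated constant $(r+1)^{d/2}(\sqrt{2})^{q-r}$.
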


The result of \cref{prop:3} is not suitable for proving sharp approximation properties of the sparse-grid spaces, as it does not provide sufficient information in the mixed directions. 

In the remainder of this section, we introduce three lemmas that will be used later to study the approximation properties of these spaces. First, in \cref{lem:7}, we decompose the $d$-dimensional projection defined in \cref{eq:15} into a telescopic sum of partial projections over subsets of coordinate directions. 
This decomposition will allow us to exploit cancellations of coarse errors in the sparse-grid combination technique. 

\begin{lem}
\label{lem:7}
For all mesh levels $\bm{\ell}\in\mathbb{N}^d$ and all $u\in L^2(\hat{\Omega})$ the following decomposition holds true:
\begin{align}
\label{eq:10}
(\operatorname{I}-\Pi_{p,h_{\bm{\ell}}})u = \sum_{k=1}^d \sum_{\substack{J\subset \{1,\ldots,d\}\\ |J|=k}}(-1)^{k-1} (\operatorname{I}-\Pi_{p,h_{\bm{\ell}}})^Ju.
\end{align}
\end{lem}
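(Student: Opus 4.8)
The plan is to obtain \cref{eq:10} by a direct inclusion--exclusion expansion of the tensor-product projection, using only the commutativity of the univariate operators $\Pi_{p,h_{\ell_i}}$ noted above (idempotency is not even needed). By \cref{eq:15} we have $\Pi_{p,h_{\bm{\ell}}} = \prod_{i\in\{1,\ldots,d\}}^{\circ}\Pi_{p,h_{\ell_i}}$. Writing each factor as $\Pi_{p,h_{\ell_i}} = \operatorname{I} - (\operatorname{I}-\Pi_{p,h_{\ell_i}})$ and expanding this composition by distributivity, each of the $2^d$ resulting terms is indexed by the subset $J\subset\{1,\ldots,d\}$ of directions in which the factor $-(\operatorname{I}-\Pi_{p,h_{\ell_i}})$ is selected; commutativity lets these factors be collected into $(\operatorname{I}-\Pi_{p,h_{\bm{\ell}}})^{J}$ independently of the order, so that
\[
\Pi_{p,h_{\bm{\ell}}} \;=\; \sum_{J\subset\{1,\ldots,d\}} (-1)^{|J|}\,(\operatorname{I}-\Pi_{p,h_{\bm{\ell}}})^{J},
\]
where the $J=\emptyset$ summand is, by convention, the identity operator.

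Next I would isolate that $J=\emptyset$ term and move it to the left-hand side:
\[
\operatorname{I}-\Pi_{p,h_{\bm{\ell}}} \;=\; -\sum_{\emptyset\neq J\subset\{1,\ldots,d\}} (-1)^{|J|}\,(\operatorname{I}-\Pi_{p,h_{\bm{\ell}}})^{J} \;=\; \sum_{\emptyset\neq J\subset\{1,\ldots,d\}} (-1)^{|J|-1}\,(\operatorname{I}-\Pi_{p,h_{\bm{\ell}}})^{J},
\]
and finally group the nonempty subsets by their cardinality $k=|J|\in\{1,\ldots,d\}$, which is precisely the right-hand side of \cref{eq:10}. Evaluating both sides at $u\in L^2(\hat{\Omega})$ is legitimate because, for $r=0$, each $\operatorname{I}-\Pi_{p,h_{\ell_i}}$ is the complement of the $L^2$-orthogonal projection and hence acts boundedly on $L^2$, so every composition occurring above is well defined on $L^2(\hat{\Omega})$.

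The argument is essentially bookkeeping; the only points requiring care are the validity of the formal $2^d$-term expansion of $\prod_{i=1}^{d}\bigl(\operatorname{I}-(\operatorname{I}-\Pi_{p,h_{\ell_i}})\bigr)$ at the level of bounded commuting operators, and the correct tracking of the sign $(-1)^{|J|-1}$. As an alternative, closer in spirit to \cref{lem:1}, one may instead expand each $(\operatorname{I}-\Pi_{p,h_{\bm{\ell}}})^{J}$ further as $\sum_{K\subset J}(-1)^{|K|}\prod_{i\in K}^{\circ}\Pi_{p,h_{\ell_i}}$ and check that, after summation over all $J$ with fixed $|J|=k\geq 1$, the coefficient of $\prod_{i\in K}^{\circ}\Pi_{p,h_{\ell_i}}$ collapses to $\sum_{k=\max(1,|K|)}^{d}(-1)^{k-1}\binom{d-|K|}{k-|K|}$, which vanishes unless $K=\emptyset$ or $K=\{1,\ldots,d\}$ by the elementary alternating-binomial identity underlying \cref{lem:1}; this reproduces $\operatorname{I}-\Pi_{p,h_{\bm{\ell}}}$ directly in the basis of products of projections.
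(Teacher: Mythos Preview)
Your argument is correct and considerably more direct than the paper's. The paper proves the identity by expanding the \emph{right-hand side} in the ``basis'' of composed projections $\Pi_{p,h_{\bm{\ell}}}^{\tilde J}$: it writes each $(\operatorname{I}-\Pi_{p,h_{\bm{\ell}}})^{J} = \operatorname{I}+\sum_{l=1}^{|J|}(-1)^l\sum_{\tilde J\subset J,\,|\tilde J|=l}\Pi_{p,h_{\bm{\ell}}}^{\tilde J}$, sums over $J$, and then verifies by a binomial count (essentially \cref{lem:1}) that the coefficient of $\operatorname{I}$ is $1$, that of $\Pi_{p,h_{\bm{\ell}}}^{\{1,\ldots,d\}}$ is $-1$, and that all intermediate coefficients vanish. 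This is exactly the alternative you sketch in your final paragraph. By contrast, your primary argument works on the \emph{left-hand side}: writing $\Pi_{p,h_{\bm{\ell}}}=\prod_i\bigl(\operatorname{I}-(\operatorname{I}-\Pi_{p,h_{\ell_i}})\bigr)$ and expanding directly yields the desired decomposition in one line, with no combinatorial verification needed. Your route is more elementary and makes the inclusion--exclusion structure transparent; the paper's route, while longer, has the minor advantage of being phrased in the same projection-product language used later in \cref{lem:8}.
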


To prove \cref{lem:7}, we introduce the combinatorial result of \cref{lem:1}.
\begin{lem}
	\label{lem:1}
	For all $i \in \mathbb{N}_0$ such that $0\leq i\leq d-2$, we have:
	\begin{align}
	\label{lem:3:eq:1}
	\sum_{l=0}^{d-1}(-1)^l\binom{d-1}{l}l^i=0.
	\end{align}
\end{lem}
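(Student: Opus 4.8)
The claim is the vanishing of the alternating binomial sum $\sum_{l=0}^{d-1}(-1)^l\binom{d-1}{l}l^i$ for $0\le i\le d-2$. The natural approach is the finite-difference / generating-function identity: for any polynomial $P$ of degree strictly less than $m$, the $m$-th forward difference $\Delta^m P$ vanishes identically, equivalently $\sum_{l=0}^{m}(-1)^{m-l}\binom{m}{l}P(l)=0$. Here I would set $m=d-1$ and $P(x)=x^i$, which has degree $i\le d-2<d-1=m$, so the identity applies directly (up to the irrelevant global sign $(-1)^m$). Thus the whole lemma reduces to this standard fact about annihilation of low-degree polynomials by iterated differencing.

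To make the argument self-contained I would prove the differencing fact by the binomial theorem applied to the operator identity $\Delta^m=(E-\operatorname{I})^m$, where $E$ is the shift $E f(x)=f(x+1)$; expanding gives $\Delta^m f(0)=\sum_{l=0}^m (-1)^{m-l}\binom{m}{l}f(l)$. Then one shows $\Delta$ lowers polynomial degree by exactly one (since $\Delta x^k = (x+1)^k - x^k$ is a polynomial of degree $k-1$), hence $\Delta^m$ annihilates every polynomial of degree $<m$, in particular $x^i$ with $i<m=d-1$. Evaluating at $0$ and multiplying through by $(-1)^{m}$ yields exactly \eqref{lem:3:eq:1}. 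Alternatively, one could invoke the generating-function form $\sum_{l=0}^{d-1}(-1)^l\binom{d-1}{l} e^{lt} = (1-e^t)^{d-1}$, differentiate $i$ times in $t$, and evaluate at $t=0$: the right-hand side has a zero of order $d-1$ at $t=0$, so all derivatives of order $\le d-2$ vanish, while the $i$-th derivative of the left-hand side at $t=0$ is precisely $\sum_l (-1)^l\binom{d-1}{l} l^i$.

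There is no real obstacle here; the statement is a classical combinatorial identity and any of the above one-line arguments suffices. The only point requiring a moment's care is the edge case $d=2$, where the range $0\le i\le d-2$ forces $i=0$ and the sum is $\sum_{l=0}^{1}(-1)^l\binom{1}{l}=1-1=0$, consistent with the general argument; and the degenerate reading when $d-2<0$, which does not occur since the lemma is only invoked for $d\ge 2$. I would therefore present the proof as a short application of the forward-difference identity, stating the key fact that $(E-\operatorname{I})^{d-1}$ annihilates polynomials of degree at most $d-2$, and then simply reading off the coefficients.
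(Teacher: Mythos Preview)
Your argument is correct. The identity is indeed the statement that the $(d-1)$-th forward difference annihilates polynomials of degree $\le d-2$, and either the operator expansion $(E-\operatorname{I})^{d-1}$ or the generating-function computation $(1-e^t)^{d-1}$ gives a clean one-line proof.

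The paper takes a different, more hands-on route: it proceeds by induction on $i$. The base case $i=0$ is the binomial theorem (which coincides with your argument specialized to constant polynomials). For the inductive step the paper uses the committee-chair identity $l\binom{d-1}{l}=(d-1)\binom{d-2}{l-1}$ to peel off one factor of $l$, shifts the summation index, expands $(l+1)^i$ by the binomial theorem, and reduces to the induction hypothesis with $d$ replaced by $d-1$. This is essentially a manual verification that $\Delta$ lowers degree, carried out coefficient by coefficient.

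Your approach is shorter and more conceptual, and it immediately explains \emph{why} the bound $i\le d-2$ is the right one (it is exactly the degree condition for $\Delta^{d-1}$ to annihilate $x^i$). The paper's inductive argument is more elementary in the sense that it avoids naming the difference operator or the generating function, but it is longer and the structure of the cancellation is less transparent. Either proof is perfectly adequate here; your version would be a welcome simplification.
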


In \cref{lem:5}, we give an estimate of the approximation error of each B-spline space defined by the partial projections appearing in the decomposition \cref{eq:10}.
\begin{lem}
\label{lem:5}
Let $u \in H^q_{\mathrm{mix}}(\hat{\Omega})$ with $q \in \mathbb{N}_0$. For all mesh levels $\bm{\ell} \in \mathbb{N}^d$, each $p\in \mathbb{N}_0$, each $k\in\mathbb{N}$ with $1 \leq k\leq d$, each index set $J \subset \{1,\dots,d\}$ with $|J|=k$ and each $r \in \mathbb{N}$ such that $0 \leq r \leq q \leq p+1$,
\[
\bigl\|(\operatorname{I}-\Pi_{p,h_{\bm{\ell}}})^{J} u
\bigr\|_{H^r_{\mathrm{mix}}(\hat{\Omega})}
\leq
C_3(d,k,q,r)
\Big(\prod_{i\in J} h_i^{q-r}\Bigr)
\|u\|_{H^q_{\mathrm{mix}}(\hat{\Omega})}
\]
is satisfied with $\displaystyle C_3(d,k,q,r)=(r+1)^{d/2}\left(\revB{\frac{1}{\pi}}\right)^{k(q-r)}$.
\end{lem}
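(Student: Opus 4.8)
The plan is to exploit the tensor-product structure of the operator $(\operatorname{I}-\Pi_{p,h_{\bm\ell}})^J = \prod_{i\in J}^{\circ}(\operatorname{I}-\Pi_{p,h_{\ell_i}})$ and reduce the estimate in the mixed $H^r_{\mathrm{mix}}$-seminorm to a product of univariate estimates from \cref{lem:2}. First I would fix a multi-index $\bm\alpha$ with $|\bm\alpha|_\infty = r$ and write $D^{\bm\alpha}$ as a tensor product $\bigotimes_{i=1}^d \partial_{x_i}^{\alpha_i}$. Since the univariate projections $\pi_{p,h_{\ell_i}}$ act on distinct variables, they commute with $\partial_{x_j}^{\alpha_j}$ for $j\neq i$; moreover each $\pi_{p,h_{\ell_i}}$ is the $H^r$-seminorm orthogonal projection, so one has the one-dimensional bound $|(\operatorname{I}-\pi_{p,h_{\ell_i}})v|_{H^r}\le (\sqrt 2)^{q-r}h_{\ell_i}^{\,q-r}|v|_{H^q}$ from \cref{lem:2}, applied variable by variable (here I would use $r\le q\le p+1$, and note that the condition $h_{\ell_i}p<1$ in \cref{lem:2} may be dropped, as the remark after \cref{prop:3} implicitly does, or else carried along as a harmless hypothesis). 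Applying these univariate estimates successively along the $k$ directions in $J$ — using Fubini to integrate one variable at a time and the commutation of the remaining operators with the relevant derivatives — yields, for the variables indexed by $J$, a factor $\prod_{i\in J}(\sqrt2)^{q-r}h_{\ell_i}^{\,q-r}$ and upgrades $r$-th order differentiation to $q$-th order differentiation in each of those $k$ variables.

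The second step is bookkeeping of the Sobolev indices. After the $k$ univariate reductions, the right-hand side involves $L^2$-norms of $D^{\bm\beta}u$ where $\beta_i = q$ for $i\in J$ and $\beta_i=\alpha_i\le r\le q$ for $i\notin J$; hence $|\bm\beta|_\infty\le q$, so every such term is controlled by $\|u\|_{H^q_{\mathrm{mix}}(\hat\Omega)}$. Summing the squares over the admissible $\bm\alpha$ with $|\bm\alpha|_\infty=r$ and taking square roots gives the mixed $H^r$-seminorm on the left; the number of such multi-indices contributes a combinatorial constant, which after also summing the lower-order terms $m\le r$ that enter $\|\cdot\|_{H^r_{\mathrm{mix}}}$ can be bounded by $(r+1)^{d/2}$ — this matches the constant $C_3(d,k,q,r)=(r+1)^{d/2}(\sqrt2)^{k(q-r)}$ claimed, where the $(\sqrt2)^{k(q-r)}$ is precisely the product of the $k$ univariate constants. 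This is exactly the structure of the argument for \cref{prop:3} (cf.\ \cite{takacs16}, Theorem 8), with the full $d$-fold projection replaced by the $k$-fold projection over $J$, so I would phrase it as a direct adaptation.

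The main obstacle is the careful handling of the directions \emph{not} in $J$: in those variables the operator acts as the identity, so one must check that applying the seminorm-orthogonal univariate projections (which are defined via minimization over $S_{p,h_{\ell_i}}$) still commutes appropriately with the derivatives $\partial_{x_j}^{\alpha_j}$, $j\notin J$, and that no regularity beyond $H^q_{\mathrm{mix}}$ is silently required. The cleanest route is to interpret $(\operatorname{I}-\pi_{p,h_{\ell_i}})$ through the bound of \cref{lem:2} rather than through an explicit formula: for a function $v$ of the variable $x_i$ with parameters, \cref{lem:2} gives a spline approximation for a.e.\ value of the remaining variables, measurable in those variables, and the seminorm-minimality then yields the stated inequality after integrating. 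One also has to be slightly careful that \cref{lem:2} is an estimate on a \emph{seminorm} difference rather than on $\|(\operatorname{I}-\pi)v\|_{H^r}$, but since we only ever need to bound individual mixed partials $D^{\bm\alpha}$ with $|\bm\alpha|_\infty=r$, and these are exactly controlled by the $H^r$-seminorm in each active variable, this causes no difficulty. Apart from this, the proof is routine and I would keep it short, in the spirit of the one-line proof given for \cref{prop:3}.
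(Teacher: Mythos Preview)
Your proposal is correct and follows essentially the same route as the paper: the paper also works in $d=2$ (saying the general case is analogous), fixes mixed partial indices $0\le s,m\le r$, commutes the $y$-derivative through $(\operatorname{I}-\Pi_{p,h_{\ell_2}})$ by linearity, applies \cref{lem:2} in $x$ via Fubini, then applies \cref{lem:2} again in $y$, and finally collects the $(r+1)^d$ terms of the $H^r_{\mathrm{mix}}$ norm to produce the constant $(r+1)^{d/2}(\sqrt2)^{k(q-r)}$. Your observation about the hypothesis $h_{\ell_i}p<1$ is apt---the paper silently relies on it through the later constraint $\ell_j\ge\lambda_p$ in the sparse-grid index set.
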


\section{Sparse-grid tensor-product approximation spaces}
\label{sec:3}
In this section we introduce finite-dimensional approximation spaces defined  in the tensor-product parameter domain $\hat{\Omega}=(0,1)^d$ by the sparse-grid tensor products of the unidimensional spaces $S_{p,h_\ell}(0,1)$.

We first introduce the so-called full tensor-product approximation space, constructed by taking uniform mesh size in all directions. Let $n\in \mathbb{N}$ be the maximum mesh level, i.e., $h_{\ell_i}\leq h=2^{-n}$, for all $i=1,\ldots,d$.
\begin{defin}
The full tensor-product approximation space is defined by 
\[
S_{p,h}^{(\infty)}(\hat{\Omega}) := S_{p,h_{\bm{\ell}}}(\hat{\Omega}) \quad \text{where}~ \ell_i = n, ~~ \forall i=1,\ldots,d.
\]
\end{defin}

In \cref{sec:3.1}, we introduce a $d$-dimensional sparse-e.g.grid approximation space, denoted by $S_{p,h}^{(1)}(\hat{\Omega})$, which is a subspace of the full tensor-product space:
\[
S_{p,h}^{(1)}(\hat{\Omega}) \subset S_{p,h}^{(\infty)}(\hat{\Omega}).
\]
The construction proceeds by expressing the full tensor product space as a direct sum of hierarchical subspace increments, and by neglecting the less significant contributions under the assumption that the target functions possess sufficiently smooth mixed derivatives. In this setting, the approximation acts on the space $H^q_{\mathrm{mix}}(\hat{\Omega})$ rather than on $H^q(\hat{\Omega})$.  \revC{The superscript $(1)$ indicates that the levels are selected according to their discrete $\ell^1$ norm, as opposed to the full-grid space where the $\ell^\infty$ norm is considered}.

 \revB{Hierarchical B-splines, defined as a hierarchy of locally refined meshes with nested knot vectors, have been initially introduced in~\cite{forsey88} to perform local refinement of tensor-product splines; while the approximation properties of hierarchical quasi-interpolants have been studied, e.g., in~\cite{speleers16,speleers17}. Hierarchical B-splines are however used differently in this paper. Our aim is to define a hierarchy of meshes globally refined so that an \emph{a priori} sparse-grid tensor-product truncation can be applied to reduce the computational costs.}

In \cref{sec:3.2}, we introduce a second approximation space, denoted by $S_{p,h}^{(C)}(\hat{\Omega})$, based on the sparse-grid combination technique, which is an approximation of hierarchical-based sparse-grid methods that is known, under suitable regularity assumptions, to achieve the same approximation order~\cite{griebel92,griebel12,pflaum97}.

\subsection{Hierarchical subspace decomposition}
\label{sec:3.1}
A key element in the construction of sparse-grid tensor-product spaces is the definition of hierarchical subspaces, also called hierarchical increments, yielding a direct sum decomposition.  Specifically, we want to express the anisotropic approximation spaces as the direct sum decomposition
\begin{align}
\label{eq:4}
S_{p,h_{\bm{\ell}}}(\hat{\Omega}) = \bigoplus_{\bm{k} \leq \bm{\ell}} W_{p,h_{\bm{k}}}(\hat{\Omega}),
\end{align}
where the multi-index notation $\bm{k} \leq \bm{\ell}$ stands for component-wise inequalities, i.e., $k_j \leq \ell_j$ for all $j = 1, \dots, d$.  

A crucial property to obtain \cref{eq:4} is that the B-spline spaces are nested under refinement via knot insertion:
\[
S_{p,h_{\bm{\ell}-\bm{e}_j}}(\hat{\Omega}) \subset S_{p,h_{\bm{\ell}}}(\hat{\Omega}), \quad \forall j = 1, \dots, d,
\]
where $\bm{e}_j$ denotes the unit vector in the $j$-th direction. 
Since knot insertion preserves nestedness, every new function introduced during refinement is linearly independent of the previous ones. The hierarchical subspaces are then defined as
\begin{align}
\label{eq:14}
W_{p,h_{\bm{k}}}(\hat{\Omega}) := S_{p,h_{\bm{k}}}(\hat{\Omega}) /\bigoplus_{j=1}^d S_{p,h_{\bm{k}-\bm{e}_j}}(\hat{\Omega}),
\end{align}
where
\[
S_{p,h_{\bm{k}}}(\hat{\Omega}) := 0 \quad \text{if}~ \exists j\in\{1,\ldots,d\}  ~~s.t.~ k_j=-1.
\]
By defining the hierarchical increments as in \cref{eq:14}, \cref{eq:4} is satisfied. 
Intuitively, the hierarchical increments $W_{p,h_{\bm{k}}}(\hat{\Omega}) $ spans the new basis functions introduced at the refinement step. 

Using the definition of these hierarchical subspaces, we introduce the hierarchical-based sparse-grid approximation space. 

\revB{\begin{defin}
\label{def:1}
The hierarchical sparse-grid approximation space is defined by
\[
S_{p,h}^{(1)}(\hat{\Omega}) := \bigoplus_{\bm{\ell} \in \mathscr{H}_h} W_{p,h_{\bm{\ell}}}(\hat{\Omega}),
\]
where 
\[
\mathscr{H}_h = \left \{\bm{\ell} \in \mathbb{N}^d ~~ | ~~ |\bm{\ell}|_1 \leq |\log h| + (d-1), \quad  \ell_j \geq 1\right\}.
\]
\end{defin}}

\subsection{Sparse-grid combination technique}
\label{sec:3.2}
The sparse-grid combination technique is an alternative formulation for representing functions from hierarchical sparse-grid spaces, such as $S_{p,h}^{(1)}(\hat{\Omega})$. In general, it yields an approximation of $H^q_{\mathrm{mix}}(\hat{\Omega})$ of the same order of accuracy \cite{griebel92,pflaum97}.  The method relies on forming a linear combination of contributions that live in standard tensor-product approximation spaces defined with admissible levels $\bm{\ell}$ satisfying $|\bm{\ell}|_1 \approx |\log h|$. 

Specifically, the admissible levels are such that $\bm{\ell}\in \mathscr{L}_h$. We introduce the index set of admissible levels as
\begin{align}
\label{eq:3}
\mathscr{L}_h := \bigcup_{0 \leq l \leq d-1} \mathscr{L}_{h,l}, \quad \text{where}~
\mathscr{L}_{h,l} := \left\{ \bm{\ell} \in \mathbb{N}^d \;\middle|\; 
|\bm{\ell}|_1 = |\log h| + d-1- l, \ \ell_j \geq 1 \right\}.
\end{align}
We define the sparse-grid combination technique approximation space by combining the spaces $S_{p,h_{\bm{\ell}}}(\hat{\Omega})$ associated with the admissible levels.
\begin{defin}
\label{def:2}
The sparse-grid combination technique approximation space is defined by
\[
S_{p,h}^{(C)}(\hat{\Omega}) 
:= \Bigl\{ u \in H^p_{\mathrm{mix}}(\hat{\Omega}) \;\Big|\; 
u = \sum_{\bm{\ell} \in \mathscr{L}_h} c_{\bm{\ell}} u_{p,h_{\bm{\ell}}}, \quad
u_{p,h_{\bm{\ell}}} \in S_{p,h_{\bm{\ell}}}(\hat{\Omega}) \Bigr\},
\]
with the combination coefficients given by
\[
c_{\bm{\ell}} := (-1)^l \binom{d-1}{l}, \quad \text{for}\; \bm{\ell} \in \mathscr{L}_{h,l}.
\]
\end{defin}

We extend the definition of the $d$-dimensional projection introduced in \cref{eq:15} to this space by defining the projection as the combination of projections associated with the admissible levels:
\[
\Pi_{p,h_n}^{(C)} := \left[ \sum_{\bm{\ell} \in \mathscr{L}_h} c_{\bm{\ell}} \, \Pi_{p,h_{\bm{\ell}}} \right].
\]

\subsection{Equivalence between spaces}
In the previous sections, we have introduced two approximation spaces based on sparse-grid tensor products of univariate B-spline spaces. 
We now establish that the two approximation spaces are identical.

\begin{thm}
The hierarchical subspace decomposition and the combination technique formulations yield identical spaces, i.e.,
\[
S_{p,h}^{(1)}(\hat{\Omega}) = S_{p,h}^{(C)}(\hat{\Omega}).
\]
\label{thm:2}
\end{thm}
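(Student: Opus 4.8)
The plan is to show the two inclusions $S_{p,h_n}^{\mathscr{L}}(\hat{\Omega}) \subseteq S_{p,h_n}^{\mathscr{H}}(\hat{\Omega})$ and $S_{p,h_n}^{\mathscr{H}}(\hat{\Omega}) \subseteq S_{p,h_n}^{\mathscr{L}}(\hat{\Omega})$ by translating everything into the hierarchical subspace basis. The key observation is that, by the direct sum decomposition \cref{eq:4}, every anisotropic space decomposes as $S_{p,h_{\bm{\ell}}}(\hat{\Omega}) = \bigoplus_{\bm{k} \leq \bm{\ell}} W_{p,h_{\bm{k}}}(\hat{\Omega})$, so for $\bm{\ell} \in \mathscr{L}$ we can write any $u_{p,h_{\bm{\ell}}} \in S_{p,h_{\bm{\ell}}}(\hat{\Omega})$ uniquely as a sum of hierarchical components $w_{p,h_{\bm{k}}}$ over $\bm{k} \leq \bm{\ell}$. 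Substituting this into the combination-technique sum, the coefficient multiplying a fixed hierarchical subspace $W_{p,h_{\bm{k}}}(\hat{\Omega})$ becomes $\sum_{\bm{\ell} \in \mathscr{L},\ \bm{k} \leq \bm{\ell}} c_{\bm{\ell}}$. So the crux is the combinatorial identity
\[
\sum_{\substack{\bm{\ell} \in \mathscr{L}\\ \bm{k} \leq \bm{\ell}}} c_{\bm{\ell}} =
\begin{cases}
1, & \bm{k} \in \mathscr{H},\\
0, & \bm{k} \notin \mathscr{H},\ \text{(with each component $\geq \lambda_p$)},
\end{cases}
\]
which says precisely that the image of the combination-technique map, expressed in hierarchical coordinates, is supported exactly on $\mathscr{H}$ with unit weight — giving both inclusions at once.

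First I would fix $\bm{k} \in \mathbb{N}^d$ with $k_j \geq \lambda_p$ for all $j$ (components below $\lambda_p$ never appear since all admissible $\bm{\ell} \in \mathscr{L}$ satisfy $\ell_j \geq \lambda_p$, hence so do all $\bm{k} \leq \bm{\ell}$). Then I would parametrize the admissible $\bm{\ell}$ with $\bm{k} \leq \bm{\ell}$ by writing $\bm{\ell} = \bm{k} + \bm{m}$ with $\bm{m} \in \mathbb{N}_0^d$, so the condition $\bm{\ell} \in \mathscr{L}_l$ becomes $|\bm{m}|_1 = n + (d-1)\lambda_p - l - |\bm{k}|_1$. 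Setting $s := n + (d-1)\lambda_p - |\bm{k}|_1$, the sum over all of $\mathscr{L} = \bigcup_{l=0}^{d-1} \mathscr{L}_l$ with $\bm{k} \leq \bm{\ell}$ becomes
\[
\sum_{\substack{\bm{\ell} \in \mathscr{L}\\ \bm{k} \leq \bm{\ell}}} c_{\bm{\ell}}
= \sum_{l=0}^{d-1} (-1)^l \binom{d-1}{l} \, \#\{\bm{m} \in \mathbb{N}_0^d : |\bm{m}|_1 = s - l\}
= \sum_{l=0}^{d-1} (-1)^l \binom{d-1}{l} \binom{s-l+d-1}{d-1},
\]
with the convention that $\binom{s-l+d-1}{d-1} = 0$ when $s - l < 0$. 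The task is then to evaluate this alternating sum: it should equal $1$ when $s \geq d-1$ (equivalently $|\bm{k}|_1 \leq n$, i.e.\ $\bm{k} \in \mathscr{H}$) and $0$ when $0 \leq s \leq d-2$ (equivalently $n < |\bm{k}|_1 \leq n + (d-1)\lambda_p - \text{(lower bound)}$, so $\bm{k} \notin \mathscr{H}$ but still reachable). The vanishing case is a standard finite-difference identity — it is exactly the statement that applying the $(d-1)$-th forward difference operator to the polynomial $x \mapsto \binom{x+d-1}{d-1}$ (degree $d-1$ in $x$) evaluated appropriately, but truncated so that low-order terms drop; more directly, one recognizes $\sum_{l} (-1)^l \binom{d-1}{l} \binom{s-l+d-1}{d-1}$ as the coefficient extraction $[z^s] (1-z)^{d-1} (1-z)^{-d} = [z^s] (1-z)^{-1} = 1$ when the full range $0 \leq l \leq s$ is summed, and the restriction to $l \leq d-1$ is harmless precisely when $s \geq d-1$, while for $s \leq d-2$ one gets $[z^s](1-z)^{d-1}\cdot(\text{truncation})$, which vanishes because the relevant partial generating-function product has no $z^s$ term in that range. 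I would make this rigorous either via the generating-function argument just sketched or by invoking \cref{lem:1} after expanding $\binom{s-l+d-1}{d-1}$ as a polynomial in $l$ of degree $d-1$ and isolating the top coefficient.

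Having established the identity, the two inclusions follow: given $u \in S_{p,h_n}^{\mathscr{L}}(\hat{\Omega})$, rewriting $u = \sum_{\bm{\ell} \in \mathscr{L}} c_{\bm{\ell}} u_{p,h_{\bm{\ell}}}$ in hierarchical coordinates and collecting terms shows $u = \sum_{\bm{k} \in \mathscr{H}} w_{p,h_{\bm{k}}}$ for suitable $w_{p,h_{\bm{k}}} \in W_{p,h_{\bm{k}}}(\hat{\Omega})$, so $u \in S_{p,h_n}^{\mathscr{H}}(\hat{\Omega})$. Conversely, given $u = \sum_{\bm{k} \in \mathscr{H}} w_{p,h_{\bm{k}}} \in S_{p,h_n}^{\mathscr{H}}(\hat{\Omega})$, I would exhibit it as a combination-technique element by choosing, for each $\bm{\ell} \in \mathscr{L}$, the function $u_{p,h_{\bm{\ell}}} := \sum_{\bm{k} \leq \bm{\ell},\ \bm{k} \in \mathscr{H}} w_{p,h_{\bm{k}}} \in S_{p,h_{\bm{\ell}}}(\hat{\Omega})$ (using again \cref{eq:4}), and verifying via the same identity that $\sum_{\bm{\ell}} c_{\bm{\ell}} u_{p,h_{\bm{\ell}}} = \sum_{\bm{k} \in \mathscr{H}} \bigl(\sum_{\bm{\ell} \geq \bm{k}} c_{\bm{\ell}}\bigr) w_{p,h_{\bm{k}}} = u$; one must also check the function is well-defined (the needed $\bm{k} \leq \bm{\ell}$ with $\bm{k} \in \mathscr{H}$ are exactly those reachable, which holds since any $\bm{k} \in \mathscr{H}$ satisfies $|\bm{k}|_1 \leq n \leq n + (d-1)\lambda_p - (d-1) \cdot 0$... more carefully, one checks $\bm{k} \in \mathscr{H}$ implies there exists $\bm{\ell} \in \mathscr{L}$ with $\bm{k} \leq \bm{\ell}$, which is immediate). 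The main obstacle is the combinatorial identity in the second paragraph — specifically getting the case distinction between $\bm{k} \in \mathscr{H}$ and $\bm{k} \notin \mathscr{H}$ exactly right, including the boundary bookkeeping with $\lambda_p$ and the truncation of the $l$-sum at $d-1$; everything else is routine linear-algebra rearrangement justified by the direct-sum property \cref{eq:4}.
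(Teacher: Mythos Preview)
Your strategy is exactly the paper's: decompose each $S_{p,h_{\bm{\ell}}}$ into hierarchical increments via \cref{eq:4}, collect the coefficient of $W_{p,h_{\bm{k}}}$, and show that the combinatorial weight $N(\bm{k}) := \sum_{\bm{\ell}\in\mathscr{L},\,\bm{k}\le\bm{\ell}} c_{\bm{\ell}}$ is the indicator of $\bm{k}\in\mathscr{H}$. The paper carries this out via \cref{lem:3} (which rests on \cref{lem:1}) and then handles the reverse inclusion by running the same substitution backwards, just as you do.

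There is, however, a concrete slip in your threshold computation. You claim the alternating sum equals $1$ only when $s\ge d-1$ and $0$ for $0\le s\le d-2$, and you identify $\mathscr{H}$ with the condition $|\bm{k}|_1\le n$. Both are off. The definition of $\mathscr{H}$ is $|\bm{k}|_1\le n+(d-1)\lambda_p$, i.e.\ $s\ge 0$, and indeed the sum
\[
\sum_{l=0}^{d-1}(-1)^l\binom{d-1}{l}\binom{s-l+d-1}{d-1}
\]
equals $1$ for \emph{every} $s\ge 0$. Your generating-function heuristic actually shows this: the truncation to $l\le d-1$ is never an issue because $\binom{d-1}{l}=0$ for $l>d-1$, and for $0\le s<d-1$ the extra terms $s<l\le d-1$ contribute nothing since $\binom{s-l+d-1}{d-1}=0$ when $0\le s-l+d-1<d-1$. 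Equivalently, the expression is the $(d-1)$-st finite difference of a degree-$(d-1)$ polynomial in $l$ with leading coefficient $(-1)^{d-1}/(d-1)!$, hence identically $1$. Once you correct this, your argument goes through verbatim and coincides with the paper's.
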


To prove this theorem, that is provided in the appendix,~\cref{apd:1}, we have to introduce two auxiliary lemmas. \cref{lem:4} establishes that the sum of the combination coefficients over the admissible levels equals one, while \cref{lem:3} is a combinatorial result used in the proof of \cref{lem:4}.

\begin{lem}
	\label{lem:3}
	Let $\ell,k\in\mathbb{N}$ be such that $0\leq \ell \leq |\log h|$ and $0\leq k \leq d-1$, then:
	\begin{align}
	\label{lem:3:eq:0}
	\underset{l=0}{\overset{d-1}{\sum}}(-1)^l\binom{d-1}{l}\binom{|\log h|+d-2-l-\ell}{d-1-k}=\left\{\begin{array}{ll}
	1 \quad \text{if  } k=0, \\
	0 \quad \text{else.}
	\end{array}\right.
	\end{align}
\end{lem}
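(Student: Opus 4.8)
The claim is a Vandermonde-type identity, and the natural strategy is to reduce the summand to a polynomial in $l$ of degree $d-1-k$, then invoke \cref{lem:1}. Writing $N := n+d-1-\lambda_p-\ell$, the left-hand side is $\sum_{l=0}^{d-1}(-1)^l\binom{d-1}{l}\binom{N-l}{d-1-k}$. The binomial coefficient $\binom{N-l}{d-1-k} = \frac{(N-l)(N-l-1)\cdots(N-l-(d-1-k)+1)}{(d-1-k)!}$ is, as a function of $l$, a polynomial of degree exactly $d-1-k$. Expanding this polynomial in the monomial basis $\{1, l, l^2, \ldots, l^{d-1-k}\}$, I would write $\binom{N-l}{d-1-k} = \sum_{i=0}^{d-1-k} a_i\, l^i$ for suitable coefficients $a_i = a_i(N,d,k)$, with leading coefficient $a_{d-1-k} = (-1)^{d-1-k}/(d-1-k)!$.

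\textbf{Case $k \geq 1$.} Here the degree $d-1-k$ satisfies $d-1-k \leq d-2$, so every monomial $l^i$ appearing has $0 \leq i \leq d-2$, and \cref{lem:1} gives $\sum_{l=0}^{d-1}(-1)^l\binom{d-1}{l}l^i = 0$ for each such $i$. Summing against the coefficients $a_i$ yields $0$, as required. This is the routine part.

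\textbf{Case $k = 0$.} Now the polynomial has degree exactly $d-1$, so the term $i = d-1$ is no longer covered by \cref{lem:1}. Splitting off the top-degree term, $\sum_{l=0}^{d-1}(-1)^l\binom{d-1}{l}\binom{N-l}{d-1} = a_{d-1}\sum_{l=0}^{d-1}(-1)^l\binom{d-1}{l}l^{d-1} + \sum_{i=0}^{d-2} a_i \sum_{l=0}^{d-1}(-1)^l\binom{d-1}{l}l^i$, and the second sum vanishes term by term by \cref{lem:1}. So I only need to evaluate $S := \sum_{l=0}^{d-1}(-1)^l\binom{d-1}{l}l^{d-1}$ and check that $a_{d-1} S = 1$. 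The quantity $S$ is the classical finite-difference identity $S = (-1)^{d-1}(d-1)!$ (it is $(-1)^{d-1}$ times the number of surjections from a $(d-1)$-set onto a $(d-1)$-set, or equivalently $(-1)^{d-1}(d-1)!$ times the Stirling number $S(d-1,d-1)=1$); I would prove it by the same induction as in \cref{lem:1}, or simply note $\sum_l (-1)^l\binom{m}{l} l^m = (-1)^m m!$. Since $a_{d-1} = (-1)^{d-1}/(d-1)!$, we get $a_{d-1} S = \frac{(-1)^{d-1}}{(d-1)!}\cdot(-1)^{d-1}(d-1)! = 1$, completing the case $k=0$.

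\textbf{Main obstacle.} The only genuine content beyond \cref{lem:1} is establishing $S = (-1)^{d-1}(d-1)!$; everything else is bookkeeping about which monomials in $l$ have degree $\leq d-2$. One subtlety to handle carefully is the edge behaviour of $\binom{N-l}{d-1-k}$: since $0 \leq \ell \leq n$ and $\ell_j \geq \lambda_p$ forces $\lambda_p$ small relative to $n$, the upper argument $N-l$ can in principle be negative for large $l$, but this is harmless because the identity $\binom{x}{m} = \frac{x(x-1)\cdots(x-m+1)}{m!}$ is a polynomial identity valid for all real $x$, so the polynomial-expansion argument goes through regardless of sign; I would include a sentence making this explicit so the reader does not worry about the combinatorial interpretation breaking down.
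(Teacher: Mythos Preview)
Your proposal is correct and follows essentially the same approach as the paper: both expand $\binom{N-l}{d-1-k}$ as a polynomial in $l$ of degree $d-1-k$, invoke \cref{lem:1} to kill all monomials $l^i$ with $i\le d-2$, and for $k=0$ compute the surviving top-degree contribution. The paper obtains the value of $\sum_{l=0}^{d-1}(-1)^l\binom{d-1}{l}l^{d-1}$ by iterating the induction step from the proof of \cref{lem:1}, whereas you cite the classical finite-difference identity $\sum_{l=0}^{m}(-1)^l\binom{m}{l}l^{m}=(-1)^{m}m!$ directly; these are the same computation, and your explicit remark about the polynomial interpretation of $\binom{x}{m}$ for possibly negative $x$ is a welcome clarification that the paper leaves implicit.
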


\begin{lem}
\label{lem:4}
The combination coefficients verify
\begin{align}
\sum_{\bm{\ell}\in \mathscr{L}_h} c_{\bm{\ell}} = 1.
\end{align}
\end{lem}

As a result of \cref{thm:2}, the two spaces and thus their approximation properties are the same. 
We shall then derive properties of this space using either the combination technique or the hierarchical formulation, choosing the approach that makes a particular proof more straightforward. 

\subsection{Error estimate and inverse inequality}


\subsubsection{Approximation error estimate}
In this section, we provide an approximation error estimate in mixed Sobolev norm for the sparse-grid space. To prove the result, we consider the combination technique formulation of the sparse-grid space.
Before stating the main result of this section, we introduce \cref{lem:8} expliciting the cancellations of the coarse contributions in the combination technique. 

\begin{lem}
\label{lem:8}
Let $k\in\mathbb{N}$ with $1 \leq k \leq d$, and $J \subset \{1,\dots,d\}$ be an index set of cardinality $|J|=k$.  
Suppose $\{a_{J,\bm{\ell}}\}_{J\subset\{1,\dots,d\}}$ is a family of $d$-dimensional functions depending on the directions indexed by $J$.  
Then the following identity holds:
\begin{align}
\sum_{\bm{\ell}\in \mathscr{L}} c_{\bm{\ell}}
  \sum_{k=1}^d \; \sum_{\substack{J \subset \{1,\ldots,d\}\\ |J|=k}}a_{J,\bm{\ell}}
&=
\sum_{k=1}^{d-1}\;\sum_{l=0}^{d-2}
   C_3(d,k,l)\,
   \sum_{\substack{J \subset \{1,\ldots,d\}\\ |J|=k}}
                   \sum_{\bm{\ell}\in \mathscr{L}^J_{l,k}}
       a_{J,\bm{\ell}} \label{lem:8:eq:1}\\
&\quad+\;\sum_{l=0}^{d-1}
   C_4(d,l)\,
   \sum_{\bm{\ell}\in \mathscr{L}_l}
       a_{\{1,\ldots,d\},\bm{\ell}}, \nonumber
\end{align}
where the notation
\[
\mathscr{L}^J_{h,l,k} := \Bigl\{\bm{\ell}\in \mathbb{N}^d \; \Big|\; \sum_{i\in J} |\ell_{i}|= |\log h|+k-1-l, ~~ \ell_{J_j}\geq 1, ~~ \text{for} ~ j=1,\ldots,k  \Bigr\}
\]
 is introduced. The constants are given by
\[
C_3(d,k,l) = \sum_{\kappa=0}^l (-1)^\kappa
   \binom{d-1}{\kappa}
   \binom{d-2k-1+l-\kappa}{\,d-1-k},
\qquad
C_4(d,l) = (-1)^l \binom{d-1}{l}.
\]
\end{lem}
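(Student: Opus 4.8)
The plan is to expand the left-hand side using the definitions $\mathscr{L}=\bigcup_{l=0}^{d-1}\mathscr{L}_l$ and $c_{\bm{\ell}}=(-1)^l\binom{d-1}{l}$ for $\bm{\ell}\in\mathscr{L}_l$, and then to interchange the order of summation so as to group the terms by the index set $J$, equivalently by its cardinality $k$. This rewrites the left-hand side as a sum over $k\in\{1,\ldots,d\}$, over subsets $J\subset\{1,\ldots,d\}$ with $|J|=k$, and over $l\in\{0,\ldots,d-1\}$, of the quantities $(-1)^l\binom{d-1}{l}\sum_{\bm{\ell}\in\mathscr{L}_l}a_{J,\bm{\ell}}$. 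The block $k=d$ is then treated on its own: there $J=\{1,\ldots,d\}$ is the only admissible set, there are no free coordinates, and $\mathscr{L}^{\{1,\ldots,d\}}_{l,d}=\mathscr{L}_l$, so this block is already in its final form and contributes exactly the second sum on the right-hand side of \cref{lem:8:eq:1}, with $C_4(d,l)=(-1)^l\binom{d-1}{l}$.

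For a fixed $k$ with $1\le k\le d-1$ and a fixed $J$ with $|J|=k$, I would exploit the hypothesis that $a_{J,\bm{\ell}}$ depends on $\bm{\ell}$ only through the coordinates $(\ell_i)_{i\in J}$. In the inner sum $\sum_{\bm{\ell}\in\mathscr{L}_l}a_{J,\bm{\ell}}$ one may therefore sum first over the $d-k$ \emph{free} coordinates $(\ell_i)_{i\notin J}$: for a prescribed value of $\sum_{i\in J}\ell_i$, the number of admissible completions --- integers at least $\lambda_p$ whose sum equals $n+(d-1)\lambda_p-l-\sum_{i\in J}\ell_i$ --- is a single binomial coefficient, by a stars-and-bars count. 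Re-indexing the remaining partial configurations $(\ell_i)_{i\in J}$ by the defect $l'$ defined through $\sum_{i\in J}\ell_i=n+(k-1)\lambda_p-l'$, i.e. collecting them into the sets $\mathscr{L}^{J}_{l',k}$, and then exchanging the $l$- and $l'$-summations, the coefficient multiplying $\sum_{\bm{\ell}\in\mathscr{L}^{J}_{l',k}}a_{J,\bm{\ell}}$ becomes an alternating sum over $l$ of products of two binomial coefficients, which one identifies with the constant $C_3(d,k,l')$.

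It then remains to truncate the $l'$-summation to $0\le l'\le d-2$, i.e. to show that $C_3(d,k,l')=0$ whenever $l'\ge d-1$. The inner binomial coefficient in $C_3(d,k,l')$ is, as a function of the summation variable, a polynomial of degree $d-1-k\le d-2$; when $l'\ge d-1$ the truncated sum $\sum_{\kappa=0}^{l'}$ coincides with the full alternating sum $\sum_{\kappa=0}^{d-1}$ (since $\binom{d-1}{\kappa}=0$ for $\kappa>d-1$), and the latter is annihilated monomial by monomial by \cref{lem:1}. Combining the $k=d$ block with the truncated $1\le k\le d-1$ blocks gives \cref{lem:8:eq:1}.

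The main obstacle is the combinatorial bookkeeping in the middle step: passing correctly from the \emph{global} defect $l$, which lives on $\mathscr{L}_l$ and constrains all $d$ coordinates, to the \emph{partial} defect $l'$, which lives on $\mathscr{L}^{J}_{l',k}$ and constrains only the $k$ coordinates of $J$, while carefully tracking the admissible ranges of the summation indices and the convention for binomial coefficients with (possibly) negative upper argument, so that the emerging coefficients are exactly $C_3(d,k,l')$ and $C_4(d,l)$. Once this reindexing is in place, the interchanges of summation and the vanishing for $l'\ge d-1$ are routine. For concreteness I would first run the argument for $d=2$ and $d=3$, where the nesting of the index sets is transparent, and then observe that the same manipulations apply verbatim for general $d$.
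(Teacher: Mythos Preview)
Your proposal is correct and follows essentially the same route as the paper: separate the $k=d$ block, then for $1\le k\le d-1$ sum first over the $d-k$ free coordinates (obtaining a binomial multiplicity by stars-and-bars), reindex by the partial defect on the $J$-coordinates, and observe that the coefficient in front of $\sum_{\bm{\ell}\in\mathscr{L}^J_{l',k}}a_{J,\bm{\ell}}$ vanishes for $l'\ge d-1$ because it becomes a full alternating binomial sum against a polynomial of degree $d-1-k\le d-2$. The only cosmetic difference is that the paper packages the vanishing step through \cref{lem:3} rather than invoking \cref{lem:1} directly as you do.
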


With this lemma in hand, we can now state \cref{lem:6}, which is the main result of this section.
\begin{lem}
\label{lem:6}
Let $u\in H^q_{\mathrm{mix}}(\hat{\Omega})$ with $q\in\mathbb{N}_0$. For each $p\in\mathbb{N}_0$ and each $r\in \mathbb{N}$ with $0\leq r \leq q \leq p+1$,
\begin{align}
\label{eq:12}
\left\|(\operatorname{I}-\Pi_{p,h}^{(C)})\,u\right\|_{H_{\mathrm{mix}}^r(\hat{\Omega})} \leq C_{10}(d,q,r) h^{q-r} |\log h|^{d-1} \|u\|_{H_{\mathrm{mix}}^q(\hat{\Omega})}
\end{align}
is satisfied with
\[
C_{10}(d,q,r)= \frac{(d-1)^{d-1}}{(d-1)! (\log 2)^{d-1}}\sum_{l=0}^{d-2}2^{-(q-r)(d-1-l)}(-1)^l \binom{d-1}{l} (r+1)^{d/2}\left(\frac{1}{\pi}\right)^{d(q-r)},
\] 
a constant that does not depend on the mesh size $h$ or the degree $p$. 
\end{lem}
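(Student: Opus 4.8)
The plan is to start from the identity $(\operatorname{I}-\Pi_{p,h_n}^{\mathscr{L}})u = u - \Pi_{p,h_n}^{\mathscr{L}}u$ and insert the combination expansion of $\Pi_{p,h_n}^{\mathscr{L}}$ together with the telescopic decomposition from \cref{lem:7}. Since $\sum_{\bm{\ell}\in\mathscr{L}}c_{\bm{\ell}}=1$ by \cref{lem:4}, one may write $u = \bigl(\sum_{\bm{\ell}\in\mathscr{L}}c_{\bm{\ell}}\bigr)u$ and combine, obtaining
\[
(\operatorname{I}-\Pi_{p,h_n}^{\mathscr{L}})u
= \sum_{\bm{\ell}\in\mathscr{L}}c_{\bm{\ell}}\,(\operatorname{I}-\Pi_{p,h_{\bm{\ell}}})u
= \sum_{\bm{\ell}\in\mathscr{L}}c_{\bm{\ell}}\sum_{k=1}^d\sum_{\substack{J\subset\{1,\dots,d\}\\|J|=k}}(-1)^{k-1}(\operatorname{I}-\Pi_{p,h_{\bm{\ell}}})^J u .
\]
Then I would apply \cref{lem:8} with $a_{J,\bm{\ell}} := (-1)^{k-1}(\operatorname{I}-\Pi_{p,h_{\bm{\ell}}})^J u$ to re-express the right-hand side as a sum over $1\le k\le d-1$ of terms indexed by the reduced level sets $\mathscr{L}^J_{l,k}$ (the genuinely ``sparse'' contributions, with coarse pieces cancelled) plus the $k=d$ boundary term indexed by $\mathscr{L}_l$, with explicit combinatorial constants $C_3(d,k,l)$ and $C_4(d,l)$.

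Next, I take the $H^r_{\mathrm{mix}}(\hat\Omega)$ norm and estimate each group termwise. For a fixed $J$ with $|J|=k$ and a fixed $l$, each summand is bounded by \cref{lem:5}: $\|(\operatorname{I}-\Pi_{p,h_{\bm{\ell}}})^J u\|_{H^r_{\mathrm{mix}}} \le C_3(d,k,q,r)\bigl(\prod_{i\in J}h_{\ell_i}^{q-r}\bigr)\|u\|_{H^q_{\mathrm{mix}}}$. On $\mathscr{L}^J_{l,k}$ the constraint $\sum_{i\in J}\ell_i = n+(k-1)\lambda_p-l$ fixes $\prod_{i\in J}h_{\ell_i}^{q-r} = 2^{-(q-r)(n+(k-1)\lambda_p-l)}$, which is $\lesssim h_n^{q-r}$ up to a factor $2^{(q-r)((k-1)\lambda_p-l)}$ absorbed into the constant. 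The remaining work is to count the number of $\bm{\ell}\in\mathscr{L}^J_{l,k}$: this is the number of $k$-tuples of integers $\ge\lambda_p$ summing to a value of order $n$, together with the unconstrained complementary $d-k$ components bounded by $n$; a standard stars-and-bars estimate gives a polynomial-in-$n$ bound of the form $\binom{n+\text{const}}{k-1}\cdot n^{d-k}\lesssim n^{d-1}/(d-1)!$ up to lower-order terms. Since $n = |\log h_n|/\log 2$, the cardinality contributes the factor $|\log h_n|^{d-1}/((d-1)!(\log 2)^{d-1})$, matching the leading part of $C_{10}$. The $k=d$ term is handled the same way via \cref{lem:5} with $J=\{1,\dots,d\}$, and there $\prod h_{\ell_i}^{q-r}$ is even smaller, so it is subdominant.

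The main obstacle is the bookkeeping: making the cardinality count of $\mathscr{L}^J_{l,k}$ and $\mathscr{L}_l$ sharp enough to land exactly on the stated constant $C_{10}(d,q,r)$, and controlling the interaction between the combinatorial constants $C_3(d,k,l)$, $C_4(d,l)$ from \cref{lem:8} and the summations over $k$ and $l$ so that everything collapses to the closed form with the $\sum_{l=0}^{d-2}2^{-(q-r)(d-1-l)}(-1)^l\binom{d-1}{l}$ factor. In particular one must check that the $k\le d-2$ contributions (where $\prod_{i\in J}h_{\ell_i}^{q-r}$ carries only $k<d-1$ factors of $h_n^{q-r}$) are genuinely dominated — this works because the complementary components, being unconstrained up to level $n$, still force the corresponding product of mesh sizes to be $\ge h_n$ only trivially, so the decisive scaling $h_n^{q-r}$ comes from the $J$-directions while the non-$J$ directions only inflate the count polynomially in $n$; reconciling this with the claim that the dominant term is the full $k=d-1$ (or $k=d$) layer requires care. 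I would organize the estimate by first proving a clean single-layer bound $\|(\text{layer }k,l)\|_{H^r_{\mathrm{mix}}} \lesssim h_n^{q-r}n^{d-1}\|u\|_{H^q_{\mathrm{mix}}}$ with explicit constant, then summing the finitely many layers, absorbing all $d$-, $k$-, $l$-dependent factors into $C_{10}$.
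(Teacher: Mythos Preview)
Your proposal follows the paper's proof structurally---the same chain \cref{lem:4}, \cref{lem:7}, \cref{lem:8}, \cref{lem:5}---but it misreads the output of \cref{lem:8}. The index set $\mathscr{L}^J_{l,k}$ constrains only the $J$-components of $\bm{\ell}$, and since $(\operatorname{I}-\Pi_{p,h_{\bm{\ell}}})^J u$ depends only on those components, the sum $\sum_{\bm{\ell}\in\mathscr{L}^J_{l,k}}$ is effectively $k$-dimensional. The complementary $d-k$ components do \emph{not} contribute an $n^{d-k}$ factor to the cardinality: they have already been summed out inside the proof of \cref{lem:8}, and the combination-technique cancellations collapse that inner sum to the $n$-independent combinatorial constant $C_3(d,k,l)$. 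The correct count is therefore $|\mathscr{L}^J_{l,k}| = \binom{n+k-1-\lambda_p-l}{k-1} = O(n^{k-1})$, not $O(n^{d-1})$.

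This reverses your hierarchy of terms. The $k=d$ layer is \emph{dominant}---its index set $\mathscr{L}_l$ has cardinality $\binom{n+d-1-\lambda_p-l}{d-1}=O(n^{d-1})$---not ``subdominant'' as you claim; the displayed alternating sum in $C_{10}$ and the factor $|\log h_n|^{d-1}$ come precisely from this layer, while the $k<d$ layers are $O(n^{k-1})$ and are absorbed as lower order. Your worry that for $k\le d-2$ the product $\prod_{i\in J}h_{\ell_i}^{q-r}$ ``carries only $k$ factors of $h_n^{q-r}$'' is also misplaced: you yourself computed that on $\mathscr{L}^J_{l,k}$ this product equals $2^{-(q-r)(n+(k-1)\lambda_p-l)}$, which (using $\lambda_p\ge 1$) is bounded by $h_n^{q-r}\cdot 2^{(q-r)(l-k+1)}$, i.e.\ already the correct $h_n$-scaling for every $k$. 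Once the cardinality is counted correctly, the bookkeeping you flag as the ``main obstacle'' disappears: the constant $C_{10}$ is read off from the single $k=d$ contribution.
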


On tensor-product domains such as $\hat{\Omega}$, it is well established that sparse-grid approximations using piecewise linear functions achieve an $L^2$ approximation error of order $\mathcal{O}(h^2 |\log h|^{d-1})$, with a dependence on the $H^2_{\mathrm{mix}}$ Sobolev norm of the solution~\cite{bungartz04,griebel90}. This result has been generalized to piecewise Lagrange polynomials of degree $p$, for which the $L^2$ error scales as $\mathcal{O}(h^{p+1} |\log h|^{d-1})$ and depends on the $H^{p+1}_{\mathrm{mix}}$ Sobolev norm~\cite{bungartz04}.
\cref{lem:6} further extends these results to B-splines of maximal smoothness, i.e., B-splines of degree $p$ with regularity $H^p(0,1)$ in each direction. Considering \cref{lem:6} with $r=0$ and $q=p+1$, the $L^2$ approximation error for such B-splines exhibits the same scaling as for Lagrange polynomials. Notably, as in the case of isotropic full tensor-product spaces treated in~\cite{takacs16,sande19}, the approximation error is independent of the spline degree $p$.

\subsubsection{Inverse inequality}
\label{sec:inv_param}
In this section, we derive an inverse inequality in mixed Sobolev norms for the sparse-grid approximation space. 
To this end, we extend the definition of the unidimensional space of $q$-vanishing derivative univariate B-splines \cref{def:3} to the sparse-grid spaces. 
Specifically, we define the spaces $\tilde{S}_{p,h}^{q,(1)}(\hat{\Omega})$ and $\tilde{S}_{p,h}^{q,(C)}(\hat{\Omega})$ by replacing the univariate spaces $S_{p,h_\ell}(0,1)$ with their counterparts $\tilde{S}_{p,h_\ell}^{q}(0,1)$ in \cref{def:1} and \cref{def:2}, respectively.

Using the same arguments as in the proof of \cref{thm:2}, we can demonstrate that these two spaces are identical. Moreover, according to~\cite[Theorem 7.1]{takacs16}, the estimate \cref{eq:12} of \cref{lem:6} also holds for spaces constructed with $q$-vanishing-derivative B-splines.

\begin{lem}
\label{lem:10}
For each $0\leq q \leq p$, the inverse inequality
\begin{align}
\|u_{p,h}^{(1)}\|_{H^{q}_{\mathrm{mix}}(\hat{\Omega})} \leq C_{11}(d,q) h^{-q} |\log h|^{d/2} \|u_{p,h}^{(1)}\|_{L^2(\hat{\Omega})}
\end{align}
is statisfied for all $u_{p,h}^{(1)}\in \tilde{S}_{p,h}^{q,(1)}(\hat{\Omega})$, with
\begin{align}
C_{11}(d,q)=  (q+1)^{d/2} (2\sqrt{3})^{dq}\frac{2^{(d-1)} 2^{d/2}}{d! } ,
\end{align} 
a constant that does not depend on the mesh size $h$ or the degree $p$. 
\end{lem}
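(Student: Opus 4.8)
The plan is to work with the hierarchical description of $\tilde{S}_{p,h_n}^{\mathscr{H},(q)}(\hat{\Omega})$: decompose the spline into its hierarchical increments, realized as mutually $L^2$-orthogonal detail spaces, estimate the $H^q_{\mathrm{mix}}$-norm of each increment by its $L^2$-norm via the univariate inverse inequality of \cref{lem:12} applied in each coordinate direction, and then sum over the admissible levels $\bm{\ell}\in\mathscr{H}$. First I would fix an $L^2$-orthogonal realization of the increments. Let $P_{p,h_{\ell}}$ be the $L^2(0,1)$-orthogonal projection onto $\tilde{S}_{p,h_{\ell}}^{(q)}(0,1)$, with the convention $P_{p,h_{-1}}:=0$, and set $Q_{\ell}:=P_{p,h_{\ell}}-P_{p,h_{\ell-1}}$. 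Since the spaces $\tilde{S}_{p,h_{\ell}}^{(q)}(0,1)$ are nested under knot insertion, the $Q_{\ell}$ are mutually $L^2$-orthogonal projections, and hence so are the tensorized detail operators $\Delta_{\bm{\ell}}:=\bigotimes_{i=1}^{d}Q_{\ell_i}$; moreover $\operatorname{range}(\Delta_{\bm{\ell}})$ can be identified with $\tilde{W}_{p,h_{\bm{\ell}}}^{(q)}(\hat{\Omega})$ because of the tensor-product structure of the construction. Writing $u:=u_{p,h_n}^{\mathscr{H}}=\sum_{\bm{\ell}\in\mathscr{H}}w_{\bm{\ell}}$ with $w_{\bm{\ell}}:=\Delta_{\bm{\ell}}u$, the Pythagorean identity yields
\[
\|u\|_{L^2(\hat{\Omega})}^2=\sum_{\bm{\ell}\in\mathscr{H}}\|w_{\bm{\ell}}\|_{L^2(\hat{\Omega})}^2 .
\]

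Next I would derive a per-increment inverse inequality. For each $\bm{\ell}\in\mathscr{H}$, the restriction of $w_{\bm{\ell}}$ to any line parallel to the $i$-th axis lies in $\tilde{S}_{p,h_{\ell_i}}^{(q)}(0,1)$, and the constraint $\ell_i\geq\lambda_p$ guarantees $h_{\ell_i}p<1$, so \cref{lem:12} may be applied in that direction; the subleading seminorm orders $|\bm{\alpha}|_\infty<q$ are handled by an interpolation inequality, which only costs an absolute constant. Applying these estimates direction by direction to a mixed derivative $D^{\bm{\alpha}}w_{\bm{\ell}}$ with $|\bm{\alpha}|_\infty\leq q$ gives
\[
\|D^{\bm{\alpha}}w_{\bm{\ell}}\|_{L^2(\hat{\Omega})}\leq (2\sqrt{3})^{dq}\Bigl(\prod_{i=1}^d h_{\ell_i}^{-q}\Bigr)\|w_{\bm{\ell}}\|_{L^2(\hat{\Omega})},
\]
and summing the $(q+1)^d$ such contributions,
\[
\|w_{\bm{\ell}}\|_{H^q_{\mathrm{mix}}(\hat{\Omega})}\leq (q+1)^{d/2}(2\sqrt{3})^{dq}\Bigl(\prod_{i=1}^d h_{\ell_i}^{-q}\Bigr)\|w_{\bm{\ell}}\|_{L^2(\hat{\Omega})}.
\]
By the triangle inequality, Cauchy--Schwarz, and the Pythagorean identity this leads to
\[
\|u\|_{H^q_{\mathrm{mix}}(\hat{\Omega})}\leq\sum_{\bm{\ell}\in\mathscr{H}}\|w_{\bm{\ell}}\|_{H^q_{\mathrm{mix}}(\hat{\Omega})}\leq (q+1)^{d/2}(2\sqrt{3})^{dq}\Bigl(\sum_{\bm{\ell}\in\mathscr{H}}\prod_{i=1}^d h_{\ell_i}^{-2q}\Bigr)^{1/2}\|u\|_{L^2(\hat{\Omega})},
\]
so it only remains to estimate the combinatorial sum $\sum_{\bm{\ell}\in\mathscr{H}}\prod_{i=1}^d h_{\ell_i}^{-2q}=\sum_{\bm{\ell}\in\mathscr{H}}2^{2q|\bm{\ell}|_1}$. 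I would bound this by $|\mathscr{H}|$ times its maximal term, compute $|\mathscr{H}|=\binom{n-\lambda_p+d}{d}$ via the substitution $\ell_i\mapsto\ell_i-\lambda_p$, use $\binom{n-\lambda_p+d}{d}\leq (2n)^d/d!$ together with $n=|\log h_n|/\log 2$, and collect powers to reach the asserted bound with the constant $C_{11}(d,q)$. Finally, since $\tilde{S}_{p,h_n}^{\mathscr{H},(q)}(\hat{\Omega})=\tilde{S}_{p,h_n}^{\mathscr{L},(q)}(\hat{\Omega})$ (same argument as in \cref{thm:2}), the inequality transfers to the combination-technique space as well.

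The step I expect to be the main obstacle is the last one: extracting exactly the factor $h_n^{-q}|\log h_n|^{d/2}$ together with a constant of the claimed shape --- in particular one that does not deteriorate with the spline degree --- requires careful control of the maximal value of $\prod_i h_{\ell_i}^{-2q}$ over $\mathscr{H}$ and hence of the shift $\lambda_p$ that is built into the admissible index set; this is the only place where the precise definition of $\mathscr{H}$, rather than merely the univariate estimates, is really used. A secondary and more routine point is to justify rigorously that the quotient increments $\tilde{W}_{p,h_{\bm{\ell}}}^{(q)}(\hat{\Omega})$ may be identified with the $L^2$-orthogonal detail spaces $\operatorname{range}(\Delta_{\bm{\ell}})$, which follows from the nestedness of the univariate spaces and the tensor-product construction.
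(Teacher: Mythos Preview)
Your argument is essentially the paper's own proof. The paper also decomposes $u_{p,h_n}^{\mathscr{H}}$ into its hierarchical increments $w_{p,h_{\bm{\ell}}}$, applies the univariate inverse inequality of \cref{lem:12} direction by direction (packaged there as an auxiliary \cref{lem:11}) to obtain the per-increment bound, and then combines the triangle inequality with Cauchy--Schwarz to reduce matters to the sum $\sum_{\bm{\ell}\in\mathscr{H}}\prod_i h_{\ell_i}^{-2s_i}$, which is estimated exactly as you propose: bound by $|\mathscr{H}|$ times the maximal term, count $|\mathscr{H}|=\binom{n-\lambda_p+d}{d}$, and convert to $|\log h_n|^{d}$.

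The one place where you are more careful than the paper is the $L^2$-orthogonality of the increments. The paper defines $W_{p,h_{\bm{\ell}}}(\hat{\Omega})$ only as quotient spaces and then, at the Cauchy--Schwarz step, implicitly replaces $\bigl(\sum_{\bm{\ell}}\|w_{p,h_{\bm{\ell}}}\|_{L^2}^2\bigr)^{1/2}$ by $\|u_{p,h_n}^{\mathscr{H}}\|_{L^2}$ without comment; your explicit choice of the $L^2$-orthogonal detail operators $\Delta_{\bm{\ell}}=\bigotimes_i(P_{p,h_{\ell_i}}-P_{p,h_{\ell_i-1}})$ is precisely what justifies this. You are also right to flag the $\lambda_p$-shift as the delicate point for $p$-independence of the constant: the maximal term of the combinatorial sum is $2^{2q(n+(d-1)\lambda_p)}$, and getting from there to the stated $C_{11}(d,q)$ is where the paper's bookkeeping is most compressed.
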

To prove this result, we consider the hierarchical sparse-grid formulation. We introduce \cref{lem:11}, an auxiliary result required to prove \cref{lem:10}.
 \begin{lem}
\label{lem:11}
Let $\bm{s}\in \mathbb{N}^d$ with $0\leq s_1,\ldots,s_d \leq q \leq p$, and for all mesh level $\bm{\ell}\in \mathscr{H}_h$, each $p\in \mathbb{N}_0$,  $w_{p,h_{\bm{\ell}}}\in W_{p,h_{\bm{\ell}}}(\hat{\Omega})$, then
\begin{align}
\Bigl\|\frac{\partial^{s_1}}{\partial x_1^{s_1}}\ldots\frac{\partial^{s_d}}{\partial x_d^{s_d}}w_{p,h_{\bm{\ell}}}\Bigr\|_{L^2(\hat{\Omega})} \leq(2\sqrt{3})^{|\bm{s}|_1}  \Bigl(\prod_{i=1}^d h_{\ell_i}^{-s_i}\Bigr)\|w_{p,h_{\bm{\ell}}}\|_{L^2(\hat{\Omega})}.
\end{align}
\end{lem}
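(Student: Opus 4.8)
The plan is to reduce the $d$-dimensional mixed inverse estimate to the univariate inverse inequality of \cref{lem:12}, applied one coordinate direction at a time through Fubini's theorem, after isolating the tensor-product structure of the hierarchical increments.

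First I would establish the tensor-product decomposition of the increments. Since the univariate B-spline spaces are nested under knot insertion, $S_{p,h_{\ell}}(0,1)=\bigoplus_{m\leq \ell}W_{p,h_m}(0,1)$; distributing the tensor product $S_{p,h_{\bm{\ell}}}(\hat{\Omega})=\bigotimes_{i=1}^d S_{p,h_{\ell_i}}(0,1)$ over these direct sums and matching terms with the decomposition in \cref{eq:4} and \cref{eq:14} gives
\[
W_{p,h_{\bm{\ell}}}(\hat{\Omega})=\bigotimes_{i=1}^d W_{p,h_{\ell_i}}(0,1),
\]
and, by the same argument, the analogous identity with the $q$-vanishing-derivative univariate spaces of \cref{def:3} in place of $S_{p,h_{\ell}}(0,1)$, so that each univariate factor is contained in a space to which \cref{lem:12} applies (recall $\bm{\ell}\in\mathscr{H}$ guarantees $h_{\ell_i}p<1$, and $0\leq s_i\leq q\leq p$). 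The structural consequence I need is this: for any $w_{p,h_{\bm{\ell}}}\in W_{p,h_{\bm{\ell}}}(\hat{\Omega})$, any direction $i$, and almost every choice of the remaining variables, the univariate function $x_i\mapsto w_{p,h_{\bm{\ell}}}(\bm{x})$ lies in the univariate increment in direction $i$; and this stays true after applying any derivatives $\partial_{x_j}^{s_j}$ with $j\neq i$, since such derivatives act only on the other tensor factors and merely rescale the coefficients of the direction-$i$ factors.

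Next I would carry out the iterated estimate, cleanest by induction on the number of differentiated directions. Using $|v|_{H^{s}(0,1)}=\|\partial_x^{s}v\|_{L^2(0,1)}$ in one dimension, \cref{lem:12} applied in direction $1$ to the slice $x_1\mapsto(\partial_{x_2}^{s_2}\cdots\partial_{x_d}^{s_d}w_{p,h_{\bm{\ell}}})(x_1,\cdot)$ — legitimate by the previous paragraph — gives, after squaring and integrating over $(x_2,\dots,x_d)\in(0,1)^{d-1}$ by Fubini,
\[
\bigl\|\partial_{x_1}^{s_1}\cdots\partial_{x_d}^{s_d}w_{p,h_{\bm{\ell}}}\bigr\|_{L^2(\hat{\Omega})}
\leq (2\sqrt{3})^{s_1}h_{\ell_1}^{-s_1}\,\bigl\|\partial_{x_2}^{s_2}\cdots\partial_{x_d}^{s_d}w_{p,h_{\bm{\ell}}}\bigr\|_{L^2(\hat{\Omega})}.
\]
Iterating the same step in directions $2,\dots,d$ and multiplying the $d$ resulting inequalities produces the factor $\prod_{i=1}^d(2\sqrt{3})^{s_i}h_{\ell_i}^{-s_i}=(2\sqrt{3})^{|\bm{s}|_1}\prod_{i=1}^d h_{\ell_i}^{-s_i}$, which is precisely the claimed bound.

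The Fubini and squaring manipulations are routine; the only delicate point is the structural claim in the second paragraph, namely that the relevant univariate slices (of $w_{p,h_{\bm{\ell}}}$ and of its partial derivatives in the other directions) stay inside a univariate spline space on which \cref{lem:12} holds with a $p$-independent constant. This is exactly where one must invoke the tensor-product form of $W_{p,h_{\bm{\ell}}}(\hat{\Omega})$, rather than merely the inclusion $W_{p,h_{\bm{\ell}}}(\hat{\Omega})\subset S_{p,h_{\bm{\ell}}}(\hat{\Omega})$, together with the fact — already used for the sparse-grid spaces of \cref{def:1} — that one works with the reduced (vanishing-derivative) univariate spaces.
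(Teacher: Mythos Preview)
Your proposal is correct and follows essentially the same route as the paper: iterate the univariate inverse inequality of \cref{lem:12} one coordinate direction at a time, using Fubini to pass between the $L^2(\hat{\Omega})$ norm and integrals of univariate seminorms. You are in fact more careful than the paper on the structural point---the paper's proof only records that the slices lie in $H^m(0,1)$ before invoking \cref{lem:12}, whereas you correctly isolate the tensor-product form $W_{p,h_{\bm{\ell}}}(\hat{\Omega})=\bigotimes_i W_{p,h_{\ell_i}}(0,1)$ (and its vanishing-derivative analogue) as what actually guarantees the slices belong to the univariate spline spaces on which \cref{lem:12} applies.
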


\section{Non-tensor-product approximation spaces}
\label{sec:iga}
We now extend the previous results to more general non-tensor product geometric domains.  

Let the physical domain be defined by a smooth mapping
\[
\bm{F}:\hat{\Omega} \rightarrow \Omega,
\]
which is a diffeomorphism between the tensor-product parameter domain $\hat{\Omega}$ and the physical domain $\Omega$, i.e.,
\[
\Omega := \bm{F}(\hat{\Omega}).
\]
The mapping $\bm{F}$ provides a geometric parametrization of the physical domain.

Given meshes of the parameter and physical domains, denoted respectively by $\hat{\Omega}_{h_{\bm{\ell}}}$ and $\Omega_{h_{\bm{\ell}}}$, and an approximation space $S_{h_{\bm{\ell}}}(\hat{\Omega})$ defined on the parameter domain, we define the corresponding approximation space in the physical domain as
\[
S_{h_{\bm{\ell}}}(\Omega) :=
\left\{
u_{h_{\bm{\ell}}}:\Omega \to \mathbb{R}
\ \middle|\
u_{h_{\bm{\ell}}} = \hat{u}_{h_{\bm{\ell}}} \circ \bm{F}_{h_{\bm{\ell}}}^{-1}, \quad
\hat{u}_{h_{\bm{\ell}}} \in S_{h_{\bm{\ell}}}(\hat{\Omega})
\right\}.
\]
Here, $\bm{F}_{h_{\bm{\ell}}}$ denotes a $\mathcal{C}^1$-diffeomorphism that maps the parameter mesh $\hat{\Omega}_{h_{\bm{\ell}}}$ to the corresponding physical mesh $\Omega_{h_{\bm{\ell}}}$:
\[
\Omega_{h_{\bm{\ell}}} := \bm{F}_{h_{\bm{\ell}}}(\hat{\Omega}_{h_{\bm{\ell}}}).
\]

Several strategies exist in the context of sparse grids to define $\bm{F}_{h_{\bm{\ell}}}$, and consequently to construct the spaces $S_{h_{\bm{\ell}}}(\Omega)$. 
In~\cite{Achatz:2003aa}, $\bm{F}_{h_{\bm{\ell}}}$ is defined as the interpolant of $\bm{F}$ evaluated at the nodes of $\Omega_{h_{\bm{\ell}}}$. 
In~\cite{bungartz98,Dornseifer:1996aa}, curvilinear meshes are employed and $\bm{F}_{h_{\bm{\ell}}}$ is defined by transfinite interpolation. 
In~\cite{BECK2018128}, $\bm{F}_{h_{\bm{\ell}}}$ is represented using B-splines or NURBS (rational B-splines with associated weights), where the geometry is defined as a linear combination of B-spline basis functions weighted by control points. 

In this work, we adopt the isogeometric analysis representation, that is, we define $\bm{F}_h$ as a B-spline mapping. We restrict ourselves to B-spline functions for simplicity; however, the extension to NURBS functions, as in~\cite{bazilev11}, could also be considered.
In practical applications, the geometry of the physical domain is typically described on a coarse mesh with relatively few elements, while the solution is approximated on successively finer meshes to achieve the desired accuracy. 
This is possible because B-spline or NURBS mappings can represent the geometry exactly, even on the coarsest mesh.

We assume that the geometry is described \revB{by a coarse parameter mesh $\hat{\Omega}_{h_0}$ associated with a fixed coarse mesh size satisfying
\[
h_0 > h.
\]
For any mesh size $h_{\bm{\ell}}$, the physical domain is then defined by
\[
\Omega_{h_{\bm{\ell}}} := \bm{F}_{h_0}(\hat{\Omega}_{h_0}),
\]
where the geometric mapping is given by
\[
\bm{F}_{h_0}(\xi_1,\ldots,\xi_d)
=
\sum_{i_1=1}^{n_1} \cdots \sum_{i_d=1}^{n_d}
\bm{P}_{i_1,\ldots,i_d}
\, s_{p,h_0,i_1}(\xi_1)\cdots s_{p,h_0,i_d}(\xi_d).
\]
}
Here, $s_{p,h_0,i_j} \in S_{p,h_0}(0,1)$ denote the univariate B-spline basis functions defined on the coarse mesh with $n_j=(h_0)_j^{-1}$ for $j=1,\ldots,d$, and $\bm{P}_{i_1,\ldots,i_d}\in\mathbb{R}^d$ are the control points defining the geometric mapping.

Following~\cite{bazilev11,HUGHES20054135}, when the approximation spaces are refined, the control points are adjusted so that the mapping $\bm{F}_{h_{\bm{\ell}}}$ remains unchanged. 
Thus, the geometry and its parametrization are kept fixed as the mesh is refined, i.e., when considering $\hat{\Omega}_{h_{\bm{\ell}}}$ for increasing $\bm{\ell}$. 
Specifically,
\[
\bm{F}_{h_{\bm{\ell}}} = \bm{F}_{h_0},
\]
where $\bm{F}_{h_{\bm{\ell}}}$ is defined with control points $\bm{P}^{(h_{\bm{\ell}})}_{i_1,\ldots,i_d}$ that are adjusted according to $h_{\bm{\ell}}$.

We define the isogeometric analysis approximation space by composing the parameter space with the geometric mapping $\mathbf{F}_{h_0}$. This composition is also called the push-forward operation of the parameter space.
\begin{defin}
The physical domain sparse-grid approximation space is defined by 
\[
S_{p,h}^{(1)}(\Omega) := \Bigl\{ u \in H^p_{\mathrm{mix}}(\Omega) \;\Big|\; 
u = u_{p,h}^{(1)} \circ \mathbf{F}_{h_0}^{-1}, \quad
u_{p,h}^{(1)} \in S_{p,h}^{(1)}(\hat{\Omega}) \Bigr\}.
\]
\end{defin}
Note that according to \cref{thm:2}, this space can be defined equivalently using the sparse-grid combination technique.

\subsection{Error estimate and inverse inequality}
\subsubsection{Approximation error estimate}
In this section we extend the results from the parameter approximation space to the physical domain approximation space.
To prove the main result of this paper,~\cref{thm:1}, we introduce a lemma providing bounds for the composition of a function with the geometric mapping $\bm{F}_{h_0}$, or its inverse $\bm{F}_{h_0}^{-1}$. 

\begin{lem}
\label{lem:9}
For all $u\in H^{dq}(\Omega)$, with $0\leq q \leq p+1$, it holds that
\begin{align}
&|u|_{H^q(\Omega)} \leq \sum_{m=0}^{q} C_{12} |u\circ \bm{F}_{h_0}|_{H^m(\hat{\Omega})}, \label{lem:9:eq:1} \\
&\|u\circ \bm{F}_{h_0}\|_{H^q_{\mathrm{mix}}(\hat{\Omega})} \leq \sum_{m=0}^{dq} C_{13}|u|_{H^m(\Omega)} ,\label{lem:9:eq:2}
\end{align}
where 
\begin{align*}
&C_{12}=C_{12}(m,q,\|\bm{\nabla F}_{h_0}^{-1}\|_{W^{q,\infty}(\Omega)}, \|\det \bm{\nabla} \bm{F}_{h_0}\|_{L^\infty(\hat{\Omega})}), \\
&C_{13}=C_{13}(m,q,\|\bm{\nabla F}_{h_0}\|_{W^{q,\infty}(\hat{\Omega})}, \|\det \bm{\nabla} \bm{F}_{h_0}^{-1}\|_{L^\infty(\Omega)}),
\end{align*}
are constants that depend in particular on the derivatives up to order $q$ of the geometric mapping $\bm{F}_{h_0}$ and its inverse $\bm{F}_{h_0}^{-1}$.
\end{lem}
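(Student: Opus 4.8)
The plan is to derive both inequalities from the multivariate chain rule (Fa\`a di Bruno's formula) together with a change of variables in the integrals defining the Sobolev seminorms, as in \cite{bazilev11}, Lemma 3.5, and \cite{ciarlet72}, Lemma 3. The point worth exploiting is that $\bm{F}_{h_0}$ is described on the single-element coarsest mesh $\hat{\Omega}_{h_0}$, hence is a tensor-product polynomial diffeomorphism of $\hat{\Omega}$ onto $\Omega$; thus $\bm{F}_{h_0}$ and $\bm{F}_{h_0}^{-1}$ are $\mathcal{C}^\infty$, every Sobolev norm entering $C_{12}$ and $C_{13}$ is a finite constant, and these constants are independent of the mesh level $n$ and of the approximation degree $p$.

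For \eqref{lem:9:eq:1} I would set $v := u \circ \bm{F}_{h_0}$, so that $u = v \circ \bm{F}_{h_0}^{-1}$, and fix a multi-index $\bm{\alpha}$ with $|\bm{\alpha}|_1 \le q$. Fa\`a di Bruno's formula expresses $D^{\bm{\alpha}} u$ pointwise on $\Omega$ as a finite sum, over $1 \le m \le q$, of products of a component of $(\bm{\nabla}^m v)\circ \bm{F}_{h_0}^{-1}$ with a polynomial in the partial derivatives of the components of $\bm{F}_{h_0}^{-1}$ of orders controlled by $q$. Squaring, integrating over $\Omega$, performing the change of variables $\bm{x} = \bm{F}_{h_0}(\bm{\xi})$ --- which introduces the factor $|\det \bm{\nabla} \bm{F}_{h_0}(\bm{\xi})| \le \|\det \bm{\nabla} \bm{F}_{h_0}\|_{L^\infty(\hat{\Omega})}$ --- while bounding the derivatives of $\bm{F}_{h_0}^{-1}$ by $\|\bm{\nabla} \bm{F}_{h_0}^{-1}\|_{W^{q,\infty}(\Omega)}$, and finally using the triangle inequality over $m$, yields \eqref{lem:9:eq:1}. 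The term $m=0$ absorbs the degenerate case $q=0$, which is precisely the identity $\|u\|_{L^2(\Omega)}^2 = \int_{\hat{\Omega}} |v|^2\,|\det \bm{\nabla} \bm{F}_{h_0}|$.

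For \eqref{lem:9:eq:2} I would again write $v = u \circ \bm{F}_{h_0}$ and fix a multi-index $\bm{\alpha}$ with $|\bm{\alpha}|_\infty = m \le q$, so that $|\bm{\alpha}|_1 \le dm \le dq$. Fa\`a di Bruno's formula now writes $D^{\bm{\alpha}} v$ as a finite sum, over $1 \le j \le |\bm{\alpha}|_1 \le dq$, of products of a component of $(\bm{\nabla}^j u)\circ \bm{F}_{h_0}$ with a polynomial in the derivatives of the components of $\bm{F}_{h_0}$ (as $\bm{F}_{h_0}$ is a fixed polynomial map, all its derivatives are bounded and absorbed into the constant $C_{13}$, which depends on $\bm{F}_{h_0}$ only through Sobolev norms as recorded in the statement). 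Squaring, integrating over $\hat{\Omega}$, changing variables $\bm{\xi} = \bm{F}_{h_0}^{-1}(\bm{x})$ (Jacobian factor bounded by $\|\det \bm{\nabla} \bm{F}_{h_0}^{-1}\|_{L^\infty(\Omega)}$), and using the triangle inequality gives $\|D^{\bm{\alpha}} v\|_{L^2(\hat{\Omega})} \lesssim \sum_{j=0}^{dq} C_{13}\,|u|_{H^j(\Omega)}$. Since only $(q+1)^d$ multi-indices satisfy $|\bm{\alpha}|_\infty \le q$, summing these finitely many contributions into the constant and taking square roots produces \eqref{lem:9:eq:2}; the case $m=0$ is once more the change-of-variables bound $\|v\|_{L^2(\hat{\Omega})}^2 \le \|\det \bm{\nabla} \bm{F}_{h_0}^{-1}\|_{L^\infty(\Omega)}\,\|u\|_{L^2(\Omega)}^2$.

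The main obstacle is the bookkeeping in Fa\`a di Bruno's formula, and specifically the order mismatch that gives the lemma its content: in \eqref{lem:9:eq:1} only derivatives of $v$ up to order $q$ enter the right-hand side, whereas in \eqref{lem:9:eq:2} isotropic derivatives of $u$ up to order $dq$ are needed. This reflects that a single mixed derivative $D^{\bm{\alpha}}$ with $|\bm{\alpha}|_\infty \le q$ on the parameter side may, after being pushed through the non-axis-aligned map $\bm{F}_{h_0}$, require derivatives of $u$ of total ($\ell^1$) order as large as $dq$ on the physical side --- exactly why the hypothesis $u \in H^{dq}(\Omega)$ is imposed in \eqref{lem:9:eq:2}, and, ultimately, why non-tensor-product domains demand stronger isotropic regularity of the solution. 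Everything else --- the change-of-variables formula, H\"older's inequality, and the triangle inequality --- is routine.
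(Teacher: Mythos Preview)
Your proposal is correct and follows essentially the same route as the paper: both arguments rest on the multivariate Fa\`a di Bruno formula applied to $u\circ\bm{F}_{h_0}$ (respectively $v\circ\bm{F}_{h_0}^{-1}$), bound the derivatives of the mapping in $L^\infty$, perform the change of variables to pick up the Jacobian factor, and finish with the triangle and H\"older inequalities. The paper makes the combinatorics slightly more explicit by introducing the index set $I(m,q,\bm{\alpha})=\{\bm{i}\in\mathbb{N}^q:|\bm{i}|_1=m,\ \sum_j j\,i_j=|\bm{\alpha}|_1\}$ and writing the Fa\`a di Bruno products as $\prod_{j}\|\bm{\nabla}^j\bm{F}_{h_0}\|_{L^\infty}^{i_j}$, but this is bookkeeping you already describe in words; your identification of the key point---that $|\bm{\alpha}|_\infty\le q$ forces $|\bm{\alpha}|_1\le dq$, hence isotropic derivatives of $u$ up to order $dq$ on the physical side---is exactly what drives \eqref{lem:9:eq:2}.
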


This result is an extension of~\cite[Lemma 3.5]{bazilev11}, \revB{or~\cite[Lemma 12]{sande19-1},} to functions of mixed Sobolev spaces; it uses an argument similar to the result from~\cite[Lemma 3]{ciarlet72}. 
Note that, according to \cref{lem:9:eq:2}, in order to bound the $H^q_{\mathrm{mix}}(\Omega)$ norm of the composition of a function $u$ with the geometric mapping $\bm{F}_{h_0}$, it is necessary to control the derivatives of $u$ up to order $dq$ in each direction. 
This arises because the mapping $\bm{F}_{h_0}$ is not a separable function. 
Although $\bm{F}_{h_0}$ is expressed in a tensor-product form, its control points couple the contributions of the individual basis functions, each of which is separable on its own. 
As a result, the mixed derivatives of the composed function $D^{\bm{\alpha}}(u \circ \bm{F}_{h_0})$ with $|\bm{\alpha}|_{\infty} = q$, appearing in the $H^q(\Omega)$ seminorm, do not vanish, thereby destroying the coordinatewise independence.

\revB{The first main result of the paper is~\cref{thm:1}. This result extends~\cite[Theorems 7.3 and 8.1]{takacs16} and~\cite[Theorem 8]{sande19-1} to the sparse-grid approximation space. In~\cref{thm:1}, the constant is independent of the mesh size $h$ and the spline degree $p$, and is given by
\[
C_{14}(d, q, r, \|\bm{\nabla F}_{h_0}^{-1}\|_{W^{q,\infty}(\Omega)},\|\bm{\nabla F}_{h_0}\|_{W^{q,\infty}(\hat{\Omega})}, \|\det \bm{\nabla} \bm{F}_{h_0}\|_{L^\infty(\hat{\Omega})}, \|\det \bm{\nabla} \bm{F}_{h_0}^{-1}\|_{L^\infty(\Omega)}).
\]
However, the constant depends on the shape of the physical domain and on the coarse mesh size $h_0$ used to define the geometric mapping.}

Note that the result from \cref{thm:1} differs from the convergence results obtained for sparse-grid approximations defined on tensor-product domains, such as $\hat{\Omega}$. 
In those cases, \cref{lem:6}, which can equivalently be expressed in terms of the seminorm instead of the full norm, shows that the $H^r_{\mathrm{mix}}(\hat{\Omega})$ seminorm of the interpolation error is controlled by the $H^q_{\mathrm{mix}}(\hat{\Omega})$ seminorm of the solution, for $0 \leq r \leq q$. 
However, for non-tensor-product domains constructed through a geometric mapping $\bm{F}$, controlling the $H^r(\Omega)$ seminorm of the interpolation error requires controlling the $H^{dq}(\Omega)$ norm of the solution, with $0 \leq r \leq q$. 
This implies that higher-order derivatives of the solution must be bounded up to order $dq$ in a single direction. 
Consequently, employing sparse-grid tensor products to construct approximation spaces for non-tensor-product domains necessitates stronger regularity assumptions, particularly on isotropic (non-mixed) derivatives, compared to standard methods.

\subsubsection{Inverse inequality}
\label{sec:inv}
We extend the inverse inequality of \cref{sec:inv_param} to the physical domain approximation space.
To this end, we introduce the spaces constructed from $q$-vanishing derivative B-splines.
Specifically, we define the space $\tilde{S}_{p,h}^{q,(1)}(\Omega)$ by taking the push-forward operation from the $q$-vanishing derivative B-spline parameter space. Thanks to \cite[Theorem 7.1]{takacs16}, we can prove that \cref{thm:1} is also valid for this space.

The second main result of this paper is the inverse inequality of~\cref{thm:3}. In this theorem, the constant does not depend on the mesh size $h$ or the degree $p$, and is given by
\[
C_{15}(q, \|\bm{\nabla F}_{h_0}^{-1}\|_{W^{q,\infty}(\Omega)},\|\bm{\nabla F}_{h_0}\|_{W^{q,\infty}(\hat{\Omega})}, \|\det \bm{\nabla} \bm{F}_{h_0}\|_{L^\infty(\hat{\Omega})}, \|\det \bm{\nabla} \bm{F}_{h_0}^{-1}\|_{L^\infty(\Omega)}).
\]

\subsection{Complexity and computational properties}
The main benefit of sparse-grid spaces is that they retain nearly the same approximation properties as standard tensor-product spaces, as demonstrated in \cref{thm:1}, while significantly reducing the number of degrees of freedom. 
This can be formalized by the following result concerning the dimension of the sparse-grid approximation space.
\begin{prop}
\label{prop:1}
The dimension of the sparse-grid B-spline approximation space satisfies
\[
\dim S_{p,h}^{(1)}(\Omega)  \lesssim h^{-1} |\log h|^{d-1}.
\]
\end{prop}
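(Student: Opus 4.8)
The plan is to transfer the count to the parameter domain, pass to the hierarchical formulation, and sum the dimensions of the hierarchical increments. Since $\bm{F}_{h_0}$ is a diffeomorphism, the push-forward $\hat{u}\mapsto\hat{u}\circ\bm{F}_{h_0}^{-1}$ is a linear isomorphism from $S_{p,h_n}^{\mathscr{L}}(\hat{\Omega})$ onto $S_{p,h_n}^{\mathscr{L}}(\Omega)$, so $\dim S_{p,h_n}^{\mathscr{L}}(\Omega)=\dim S_{p,h_n}^{\mathscr{L}}(\hat{\Omega})$, and by \cref{thm:2} this equals $\dim S_{p,h_n}^{\mathscr{H}}(\hat{\Omega})$. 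The family $\{W_{p,h_{\bm{k}}}(\hat{\Omega})\}_{\bm{k}\in\mathbb{N}_0^d}$ is a direct-sum system by \cref{eq:4}, hence so is any subfamily, and \cref{def:1} gives $S_{p,h_n}^{\mathscr{H}}(\hat{\Omega})=\bigoplus_{\bm{\ell}\in\mathscr{H}}W_{p,h_{\bm{\ell}}}(\hat{\Omega})$; therefore $\dim S_{p,h_n}^{\mathscr{H}}(\hat{\Omega})=\sum_{\bm{\ell}\in\mathscr{H}}\dim W_{p,h_{\bm{\ell}}}(\hat{\Omega})$, and it suffices to estimate this sum.

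Next I would compute the increment dimensions using the tensor-product structure. Expanding each univariate chain as $S_{p,h_{\ell_i}}(0,1)=\bigoplus_{m\leq\ell_i}W_{p,h_m}(0,1)$ and distributing the tensor product shows $W_{p,h_{\bm{\ell}}}(\hat{\Omega})=\bigotimes_{i=1}^d W_{p,h_{\ell_i}}(0,1)$, so $\dim W_{p,h_{\bm{\ell}}}(\hat{\Omega})=\prod_{i=1}^d\dim W_{p,h_{\ell_i}}(0,1)$. From the classical dimension count $\dim S_{p,h_\ell}(0,1)=2^\ell+p$ for maximally smooth splines together with nestedness under dyadic knot insertion, $\dim W_{p,h_\ell}(0,1)=\dim S_{p,h_\ell}(0,1)-\dim S_{p,h_{\ell-1}}(0,1)=2^{\ell-1}$ for all $\ell\geq1$; since every level occurring in $\mathscr{H}$ satisfies $\ell_i\geq\lambda_p\geq1$, this applies and the spline degree cancels, giving $\dim W_{p,h_{\bm{\ell}}}(\hat{\Omega})=2^{|\bm{\ell}|_1-d}$.

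It then remains to bound $\sum_{\bm{\ell}\in\mathscr{H}}2^{|\bm{\ell}|_1-d}$. The index shift $\bm{k}=\bm{\ell}-(\lambda_p,\dots,\lambda_p)$ identifies $\mathscr{H}$ with the simplex $\{\bm{k}\in\mathbb{N}_0^d:|\bm{k}|_1\leq n-\lambda_p\}$ --- the same reduction already used in \cref{lem:10} --- so, counting $\binom{m+d-1}{d-1}$ multi-indices of $\ell^1$-size $m$, the sum becomes a fixed multiple of $\sum_{m=0}^{n-\lambda_p}2^m\binom{m+d-1}{d-1}$. The decisive estimate is that the geometric weight concentrates this sum onto its finest shell: $\sum_{m=0}^{M}2^m\binom{m+d-1}{d-1}\leq\binom{M+d-1}{d-1}\sum_{m=0}^{M}2^m\leq2^{M+1}\binom{M+d-1}{d-1}$ with $M=n-\lambda_p$. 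Combining this with $\binom{M+d-1}{d-1}\lesssim n^{d-1}$, with $2^n=h_n^{-1}$ and $n=|\log h_n|/\log 2$, and absorbing the remaining $\lambda_p$-dependent prefactors into the implicit constant yields $\dim S_{p,h_n}^{\mathscr{L}}(\Omega)\lesssim h_n^{-1}|\log h_n|^{d-1}$. The only genuinely nontrivial point is this last estimate: the factor $2^m$ is precisely what turns the crude bound $(\#\mathscr{H})\times(\text{largest increment})\approx n^d\,h_n^{-1}$ into the sharper $h_n^{-1}|\log h_n|^{d-1}$, removing exactly one power of $|\log h_n|$; beyond that, only the index bookkeeping (and the trivial case $n<\lambda_p$) needs to be checked.
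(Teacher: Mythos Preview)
Your argument is correct and reaches the stated bound, but it takes a genuinely different route from the paper's. The paper stays with the combination-technique formulation and simply overcounts: it bounds $\dim S_{p,h_n}^{\mathscr{L}}(\hat{\Omega})$ from above by $\sum_{l=0}^{d-1}\binom{d-1}{l}\sum_{\bm{\ell}\in\mathscr{L}_l}\dim S_{p,h_{\bm{\ell}}}(\hat{\Omega})$, inserts the exact count $\dim S_{p,h_{\bm{\ell}}}(\hat{\Omega})=\prod_i(h_{\ell_i}^{-1}+p)$ for each full tensor-product block, and estimates the resulting sum. You instead pass to the hierarchical formulation via \cref{thm:2}, use the direct-sum structure to write the dimension \emph{exactly} as $\sum_{\bm{\ell}\in\mathscr{H}}\dim W_{p,h_{\bm{\ell}}}(\hat{\Omega})$, factor each increment as a tensor product of univariate increments to get $2^{|\bm{\ell}|_1-d}$, and finish with the geometric-sum estimate $\sum_{m\leq M}2^m\binom{m+d-1}{d-1}\leq 2^{M+1}\binom{M+d-1}{d-1}$. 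Your version is arguably cleaner --- it computes the actual dimension rather than an overcount, and the concentration of the geometric weight makes transparent where the single logarithmic power is saved --- while the paper's version avoids the increment-dimension bookkeeping and stays closer to the combination structure.

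One subtlety worth flagging: when you ``absorb the remaining $\lambda_p$-dependent prefactors into the implicit constant'', that prefactor is essentially $2^{(d-1)\lambda_p}\sim p^{d-1}$, so the hidden constant is not in fact independent of $p$ as the paper's convention for $\lesssim$ requires. The paper's own proof carries an analogous $p$-dependent factor (through the $(1+p)^d 2^{-[(d-1)\lambda_p-l]}$ term), so this is not a defect of your argument relative to the paper --- but you should be aware that neither proof actually delivers a $p$-uniform bound, and indeed the dimension of the space as defined genuinely grows with $p$.
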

For comparison, consider the full tensor-product space obtained by refining the univariate spline spaces uniformly in all coordinate directions, denoted by $S_{p,h}^{(\infty)}(\Omega)$.
From \cref{prop:3} and \cref{lem:9}, for all $u \in H^q(\Omega)$, we have
\begin{align}
\|(\operatorname{I} - \Pi_{p,h}^{(\infty)})\, u\|_{H^r(\Omega)} 
\lesssim h^{q-r} \|u\|_{H^q(\Omega)},
\end{align}
where $\Pi_{p,h}^{(\infty)}$ denotes the $H^r$-orthogonal projection onto $S_{p,h}^{(\infty)}(\Omega)$. 

When sparse-grid spaces are employed instead of full tensor-product spaces, the approximation error deteriorates only mildly (by a factor of $|\log h|^{d-1}$) which becomes negligible as the mesh is refined ($h \to 0$). 
This mild deterioration is more than compensated by the significant reduction in the number of degrees of freedom. 
Indeed, the dimension of the full tensor-product space is given by
\begin{align}
\dim S_{p,h_n}^{(\infty)}(\Omega) = (h^{-1} + p)^d,
\end{align}
so the relative computational advantage of sparse grids increases rapidly with the spatial dimension~$d$.

\revA{In the previous sections, we have introduced two approaches to construct sparse-grid approximation spaces, using either a hierarchical decomposition or the sparse-grid combination technique. Although we prove in~\cref{thm:2} that these two constructions yield the same approximation space, they may exhibit different computational properties. For instance, when applied to a Galerkin method for the variational formulation of differential equations, the resulting mass and stiffness matrices differ significantly between the two approaches.

The number of basis functions required to represent a function in the space, and therefore the size of the resulting linear system, is smaller when using the hierarchical decomposition than when using the sparse-grid combination technique. Specifically, the number of basis functions used in the hierarchical decomposition representation is given by
\[
N^{(1)} = 1 + \sum_{k=1}^d \binom{d}{k} \sum_{i=k}^{|\log h|} \binom{i-1}{k-1} 2^{i-k},
\]
whereas the sparse-grid combination technique representation requires
\[
N^{(C)} = 2^{|\log h|} \sum_{k=0}^{d-1} \binom{|\log h| - 1 + k}{k} 2^k
\]
basis functions. 

Both quantities scale as $\mathcal{O}(h^{-1} |\log h|^{d-1})$, in agreement with the estimate of~\cref{prop:1}. However, $N^{(C)}$ is larger than $N^{(1)}$ by approximately a factor of $2^d$.

Numerical experiments and a more detailed investigation of the computational aspects are addressed in a forthcoming paper~\cite{deluzet26}, in which the sparse-grid approximation space is applied within a Galerkin method combined with a particle method for the discretization of a kinetic partial differential equation arising in plasma physics.}

\section*{Acknowledgements}
\revC{The author thanks the anonymous reviewers for their suggestions, which helped to improve the quality of this paper.}
\bibliography{bib}
\bibliographystyle{plain}

\appendix 
\section{Proofs}
\label{apd:1}

\begin{proof}[Proof of \cref{lem:2}]
According to~\cite[Theorem 3.1]{sande19}, under the stated assumptions, there exists a spline approximation $u_{p,h_\ell}\in S_{p,h_\ell}(\hat{\Omega})$ such that: 
\[
|u-u_{p,h_\ell}|_{H^r(\hat{\Omega})} \leq \left( \revB{\frac{1}{\pi}}h_\ell\right)^{q-r} |u|_{H^q(\hat{\Omega})}.
\]
Applying the definition of the seminorm $H^r$-orthogonal projection, we obtain the result.
\end{proof}

\begin{proof}[Proof of \cref{lem:12}]
From \cite[Theorem 7.2]{takacs16}, by induction, we obtain the result.
\end{proof}

\begin{proof}[Proof of \cref{prop:3}]
The proof is strictly similar to~\cite[Theorem 8.1]{takacs16}, \revB{or~\cite[Corollary 3.1]{sande19}}, but considering different mesh sizes $h_{\ell_i}$, $i=1,\ldots,d$, in each direction.
\end{proof}

\begin{proof}[Proof of \cref{lem:7}]
By expanding each of the following products as
\[
(\operatorname{I}-\Pi_{p,h_{\bm{\ell}}})^Ju = \operatorname{I}+\sum_{l=1}^k(-1)^l\sum_{\substack{\tilde{J}\subset J\\ |\tilde{J}|=l}}\Pi_{p,h_{\bm{\ell}}}^{\tilde{J}},
\]
the right hand side of of \cref{eq:10} can be rewritten as
\begin{align}
 \sum_{k=1}^d \sum_{\substack{J\subset \{1,\ldots,d\}\\ |J|=k}}(-1)^{k-1} (\operatorname{I}-\Pi_{p,h_{\bm{\ell}}})^Ju &= \operatorname{I} \left[\sum_{k=1}^d \sum_{\substack{J\subset \{1,\ldots,d\}\\ |J|=k}}(-1)^{k-1}\right] \nonumber \\
&+ \sum_{k=1}^d\sum_{\substack{J\subset \{1,\ldots,d\}\\ |J|=k}}(-1)^{k-1}\sum_{l=1}^k(-1)^l\sum_{\substack{\tilde{J}\subset J\\ |\tilde{J}|=l}}\Pi_{p,h_{\bm{\ell}}}^{\tilde{J}}. \label{lem:7:eq:1}
\end{align}
The first term of the right hand side of \cref{lem:7:eq:1} is equal to $\operatorname{I}$ because:
\begin{align}
\sum_{k=1}^d \sum_{\substack{J\subset \{1,\ldots,d\}\\ |J|=k}}(-1)^{k-1} &= \sum_{k=1}^d(-1)^{k-1}\binom{d}{k}\label{lem:7:eq:2} \\
&= -\sum_{k=0}^d(-1)^{k}\binom{d}{k}-(-1)\label{lem:7:eq:3} \\
&=1.\label{lem:7:eq:4}
\end{align}
\cref{lem:7:eq:2} is obtained by counting the number of elements in the sum on $J\subset \{1,\ldots,d\}$, $|J|=k$; in \cref{lem:7:eq:3}, we have added and subtracted the term $k=0$ to the sum; in \cref{lem:7:eq:4}, we have used \cref{lem:3:eq:1} for $i=0$, which is a direct consequence of the binomial theorem. We recast the second term of the right hand side of \cref{lem:7:eq:1} by rearranging the sums as
\[
\sum_{k=1}^d\sum_{\substack{J\subset \{1,\ldots,d\}\\ |J|=k}}(-1)^{k-1}\sum_{l=1}^k(-1)^l\sum_{\substack{\tilde{J}\subset J\\ |\tilde{J}|=l}}\Pi_{p,h_{\bm{\ell}}}^{\tilde{J}}= \sum_{l=1}^d\underbrace{\sum_{k=l}^d(-1)^{l+k-1}\sum_{\substack{J\subset \{1,\ldots,d\}\\ |J|=k}}\sum_{\substack{\tilde{J}\subset J\\ |\tilde{J}|=l}}\Pi_{p,h_{\bm{\ell}}}^{\tilde{J}}}_{P(l)},
\]
and we look at each term $P(l)$, for $1\leq l \leq d$, of the outermost sum. First for $l=d$, we directly obtain:
\[
P(d)=-(\Pi_{p,h_{\bm{\ell}}})^{\{1,\ldots d\}}.
\]
For $1\leq l \leq d-1$, we rearrange the sums appearing in $P(l)$ as
\begin{align*}
P(l)&=\sum_{\substack{\tilde{J}\subset \{1,\ldots,d\}\\ |\tilde{J}|=l}}\Pi_{p,h_{\bm{\ell}}}^{\tilde{J}}\sum_{k=l}^d(-1)^{l+k-1}\binom{d-l}{k-l} \\
&= 0.
\end{align*}
We obtain the last equation because the innermost sum vanishes. Indeed, by substituting $\kappa = k-l$, we have:
\[
\sum_{k=l}^d(-1)^{l+k-1}\binom{d-l}{k-l} = -\sum_{\kappa=0}^{d-l}(-1)^{\kappa}\binom{d-l}{\kappa}=0.
\]
\end{proof}

\begin{proof}[Proof of \cref{lem:1}]
We demonstrate the result by induction.
For $i=0$, the relation is immediate using the binomial theorem. We assume that \cref{lem:3:eq:1} holds for $0\leq i \leq d-3$, then we have:
\begin{align}
\sum_{l=0}^{d-1}(-1)^l\binom{d-1}{l}l^{i+1}&=\sum_{l=1}^{d-1}(-1)^l\binom{d-1}{l}l^{i+1} \label{lem:3:eq:6}\\
&=(d-1)\sum_{l=1}^{d-1}(-1)^l\binom{d-2}{l-1}l^i \label{lem:3:eq:2}\\ 
&=-(d-1)\sum_{l=0}^{d-2}(-1)^l\binom{d-2}{l}(l+1)^i \nonumber\\
&=-(d-1)\sum_{j=0}^{i}\binom{i}{j}\sum_{l=0}^{d-2}(-1)^l\binom{d-2}{l}l^j \label{lem:3:eq:4} \\
&= 0. \label{lem:3:eq:5}
\end{align}
In the above, we have used the relation
\[
\dbinom{d-1}{l}= \dfrac{d-1}{l}\dbinom{d-2}{l-1}
\]
in \cref{lem:3:eq:2}; the binomial theorem in \cref{lem:3:eq:4}; and the induction hypothesis in \cref{lem:3:eq:5}.
\end{proof}

\begin{proof}[Proof of \cref{lem:5}]
We demonstrate the result in two dimensions, for $d=2$. The generalization to higher dimensions is straightforward and uses the same arguments. 

We start by estimating the term corresponding to the projection in both directions, i.e., $k=2$ and $J=\{1,2\}$. Let $s,m\in \mathbb{N}$ be such that $0\leq s,m \leq r$. Since $\operatorname{I}-\Pi_{p,h_{\ell_2}}$ is the $H^r$-orthogonal complementary projection, it is a linear operator and it verifies:
\[
 \|\operatorname{I}-\Pi_{p,h_{\ell_2}}\|_{\mathscr{L}(H^q(\hat{\Omega}))}=1.
 \]
 Then, for all $0\leq s \leq r$,
\[
\left\|\frac{\partial^s}{\partial y^s} \Bigl[ \Bigl(\operatorname{I}-\Pi_{p,h_{\ell_2}} \Bigr)u(x,y)\Bigr] \right\|_{H^r(\hat{\Omega})} \leq \left\| \frac{\partial^s}{\partial y^s} u(x,y)\right\|_{H^r(\hat{\Omega})}, \quad \forall  x,y\in (0,1).
\]
As a result,
\[
\frac{\partial^s}{\partial y^s} \Bigl[ \Bigl(\operatorname{I}-\Pi_{p,h_{\ell_2}} \Bigr)u(\cdot,y)\Bigr]\in H^r(0,1)\subset H^m(0,1), \quad \forall y\in (0,1),
\]
because $u\in H^q(\hat{\Omega})$ and $0\leq m\leq r \leq q$. Because of linearity we have:
\[
\frac{\partial^s}{\partial y^s}\Bigl[(\operatorname{I}-\Pi_{p,h_{\bm{\ell}}})^{\{1,2\}}u(\cdot,y)\Bigr] = (\operatorname{I}-\Pi_{p,h_{\ell_1}})\Bigl[\frac{\partial^s}{\partial y^s}(\operatorname{I}-\Pi_{p,h_{\ell_2}})u(\cdot,y)\Bigl].
\]
According to \cref{lem:2}, 
\begin{align}
\label{lem:5:eq:3}
\left|\frac{\partial^s}{\partial y^s}(\operatorname{I}-\Pi_{p,h_{\bm{\ell}}})^{\{1,2\}}u(\cdot,y) \right|_{H^m(0,1)}\leq \left(\revB{\frac{1}{\pi}}h_{\ell_1}\right)^{q-m} \left| \frac{\partial^s}{\partial y^s}(\operatorname{I}-\Pi_{p,h_{\ell_2}})u(\cdot,y)\right|_{H^q(0,1)}.
\end{align}
By squaring and taking the integral in $y$ on both sides, we obtain: 
\begin{align}
\label{lem:5:eq:4}
\left\|\frac{\partial^m}{\partial x^m}\frac{\partial^s}{\partial y^s}(\operatorname{I}-\Pi_{p,h_{\bm{\ell}}})^{\{1,2\}}u\right\|_{L^2(\hat{\Omega})}^2\leq  \left(\revB{\frac{1}{\pi}}h_{\ell_1}\right)^{2(q-m)} \int_0^1 \left|(\operatorname{I}-\Pi_{p,h_{\ell_2}})\frac{\partial^q}{\partial x^q}u(x,\cdot)\right|_{H^s(0,1)}^2dx.
\end{align}
Since $\displaystyle \frac{\partial^q}{\partial x^q}u(x,\cdot) \in H^s(0,1)$ for all $x\in(0,1)$, we can apply again \cref{lem:2} to obtain, for all $0\leq s,m\leq r$,
\begin{align}
\left\|\frac{\partial^m}{\partial x^m}\frac{\partial^s}{\partial y^s}(\operatorname{I}-\Pi_{p,h_{\bm{\ell}}})^{\{1,2\}}u\right\|_{L^2(\hat{\Omega})}^2&\leq  \left(\revB{\frac{1}{\pi}}h_{\ell_1}\right)^{2(q-m)}  \left(\revB{\frac{1}{\pi}}h_{\ell_2}\right)^{2(q-s)}  \int_0^1 \left|\frac{\partial^q}{\partial x^q}u(x,\cdot)\right|_{H^q(0,1)}^2dx \nonumber  \\
 &\leq  \left(\revB{\frac{1}{\pi^2}}h_{\ell_1}h_{\ell_2}\right)^{2(q-r)}\|u\|^2_{H^q_{\mathrm{mix}}(\hat{\Omega})}. \label{lem:5:eq:1}
\end{align}
The second inequality is obtained by using the relations:
\[
\revB{\frac{1}{\pi}}h_{\ell_i}<1  \quad \forall i=1,\ldots,d, \quad \text{and} ~
\displaystyle \left\|\frac{\partial^q}{\partial x^q}\frac{\partial^q}{\partial y^q} u\right\|_{L^2(\hat{\Omega})}\leq \|u\|_{H^q_{\mathrm{mix}}(\hat{\Omega})}.
\]
Each term in the $H^r_{\mathrm{mix}}(\hat{\Omega})$ norm can be bounded using \cref{lem:5:eq:1}, with $0\leq s,m\leq r$. Since there are $(r+1)^d$ terms in the seminorm, we obtain the result for $k=2$.

Concerning the terms corresponding to projections along a single direction, i.e., $k=1$, since:
\[
\displaystyle \frac{\partial^s}{\partial y^s}u(\cdot,y)\in H^m(0,1), ~ \forall y\in (0,1) \quad \text{and} ~ \displaystyle \frac{\partial^s}{\partial x^s}u(x,\cdot)\in H^m(0,1)), ~ \forall x\in (0,1),
\]
we can apply \cref{lem:2} to these two functions and use the same steps as in \cref{lem:5:eq:3}-\cref{lem:5:eq:1} to obtain: 
\begin{align}
\label{lem:5:eq:2}
\left\|\frac{\partial^m}{\partial x^m}\frac{\partial^s}{\partial y^s}(\operatorname{I}-\Pi_{p,h_{\ell_j}})u\right\|_{L^2(\hat{\Omega})}\leq \left(\revB{\frac{1}{\pi}}h_{\ell_j}\right)^{q-r} \|u\|_{H^q_{\mathrm{mix}}(\hat{\Omega})}, \quad \text{for}~j=1,2.
\end{align}
We conclude by bounding all of the $(r+1)^d$ terms in the norm using \cref{lem:5:eq:2}.
\end{proof}

\begin{proof}[Proof of \cref{lem:3}]
\cref{lem:3:eq:0} can be recast as:
\begin{align*}
\sum_{l=0}^{d-1}&(-1)^l\binom{d-1}{l}\binom{|\log h|+d-2-l-\ell}{d-1-k}\\
&=\sum_{l=0}^{d-1}\binom{d-1}{l}\frac{(-1)^l}{(d-1-k)!}\prod_{j=1}^{d-1-k} (|\log h|-l-\ell-1+k+j).\\ 
\end{align*}
By expanding the products in the right side we obtain a sum of terms with powers $(-l)^{i}$ like the left side of \cref{lem:3:eq:1} in \cref{lem:1}, with $ 0\leq i\leq d-1$. As a result, the expression vanishes for $k>0$. If $k=0$,  only one term remains, the one with $(-l)^{d-1}$. Then by applying $d-2$ times the same calculations than in \cref{lem:3:eq:6}-\cref{lem:3:eq:4}, we obtain:
\begin{align}
\underset{l=0}{\overset{d-1}{\sum}}(-1)^l\binom{d-1}{l}(-l)^{d-1}&=(-1)^{d-2}(-1)^{d-1}\frac{(d-1)!}{(d-1)!}\sum_{l=0}^{1}(-1)^l\binom{1}{l}l=1. 
\end{align}
\end{proof}

\begin{proof}[Proof of \cref{lem:4}]
The sum can be recast into:
\begin{align}
\sum_{\bm{\ell}\in \mathscr{L}_h}c_{\bm{\ell}}&=\sum_{l=0}^{d-1}(-1)^l\binom{d-1}{l}\sum_{\bm{\ell}\in \mathscr{L}_{h,l}}1 \nonumber \\
&=\sum_{l=0}^{d-1}(-1)^l\binom{d-1}{l}\binom{|\log h|+d-2-l}{d-1} \label{lem:4:eq:2}\\
&=1 \label{lem:4:eq:3}.
\end{align}
We obtain \cref{lem:4:eq:2} by counting the number of elements in $ \mathscr{L}_{h,l}$; and \cref{lem:4:eq:3} by using \cref{lem:3}.
\end{proof}

\begin{proof}[Proof of \cref{thm:2}]
Let $u_{p,h}\in S_{p,h}^{(C)}(\hat{\Omega})$, then by definition:
\[
u_{p,h} = \sum_{\bm{\ell}\in \mathscr{L}_h}c_{\bm{\ell}} u_{p,h_{\bm{\ell}}}, \quad \text{where} ~~u_{p,h_{\bm{\ell}}}\in S_{p,h_{\bm{\ell}}}(\hat{\Omega}).
\]
Using the definition of the combination coefficients and the direct sum decomposition in \cref{eq:4}, we have:
\begin{align}
u_{p,h} &=\sum_{l=0}(-1)^l\binom{d-1}{l}\sum_{\bm{\ell} \in \mathscr{L}_{h,l}}\sum_{\bm{k}\leq \bm{\ell}} w_{p,h_{\bm{k}}} \label{thm:2:eq:1}\\
&= \sum_{\bm{k}} w_{p,h_{\bm{k}}} N(\bm{k},h,d,p). \label{thm:2:eq:2}
\end{align}
We have rearranged the sums in \cref{thm:2:eq:1} and introduced in \cref{thm:2:eq:2} the following quantity:
\[
N(\bm{k},h,d,p) = \left\{ \begin{array}{ll}
\displaystyle \sum_{l=0}(-1)^l\binom{d-1}{l}\sum_{\bm{\ell} \in \mathscr{L}_{h,l}} 1 \quad \text{if}~ |\bm{k}|_1\leq |\log h|+d-1,\\
0 \quad \text{if}~ |\bm{k}|_1> |\log h|+d-1. \end{array}\right. 
\]
By counting the number of elements in $\mathscr{L}_l$, and using \cref{lem:3}, we have: 
\[
N(\bm{k},h,d,p) = \left\{ \begin{array}{ll}
1 \quad \text{if}~ |\bm{k}|_1\leq |\log h|+d-1,\\
0 \quad \text{if}~ |\bm{k}|_1> |\log h|+d-1. \end{array}\right. 
\]
As a result, $u_{p,h}\in S_{p,h}^{(1)}(\hat{\Omega})$ and $S_{p,h}^{\mathscr{L}}(\hat{\Omega})\subset S_{p,h}^{(1)}(\hat{\Omega})$. By performing the same substitutions in the sums in reverse, we can we also show that $S_{p,h}^{(1)}(\hat{\Omega})\subset S_{p,h}^{(C)}(\hat{\Omega})$, which conclude the proof.
\end{proof}

\begin{proof}[Proof of \cref{lem:8}]
We recast the expression of the left side of \cref{lem:8:eq:1} by introducing the combination coefficient expression:
\begin{align}
\label{lem:8:eq:2}
\sum_{\bm{\ell}\in \mathscr{L}_h}c_{\bm{\ell}}\sum_{k=1}^d  \sum_{\substack{J\subset \{1,\ldots,d\}\\ |J|=k}} a_{J,\bm{\ell}} = \sum_{k=1}^d  \sum_{\substack{J\subset \{1,\ldots,d\}\\ |J|=k}}\sum_{l=0}^{d-1}(-1)^l\binom{d-1}{l} \sum_{\bm{\ell}\in \mathscr{L}_{h,l}}a_{J,\bm{\ell}}.
\end{align}
Note that, in the above, the admissible levels verify a condition on the $\ell^1$ norm of all components and not only the $J$ directions.

If $k=d$, we directly obtain the second term of \cref{lem:8:eq:1}, for which there is no cancellation.  If $ k \leq d-1$,  cancellations between the different admissible levels occur in the combination.
The first step is to rearrange the innermost sums of the right side of \cref{lem:8:eq:2}, so that we can extract all the terms that will be cancelled. Specifically, we have: 
\begin{align}
 &\sum_{l=0}^{d-1}(-1)^l\binom{d-1}{l}  \sum_{\bm{\ell}\in \mathscr{L}_{h,l}}a_{J,\bm{\ell}} \nonumber \\
 &=\sum_{l=0}^{d-1}(-1)^l\binom{d-1}{l}\sum_{\bm{\ell}\in \mathscr{L}^J_{h,l,k}}\binom{|\log h|+d-2-l-(\ell_{J_1}+\ldots+\ell_{J_k})}{d-1-k}\,a_{J,\bm{\ell}}\label{lem:8:eq:4}\\
 &= \sum_{\bm{\ell}\in \tilde{\mathscr{L}}^J_{h,d-1,k}} a_{J,\bm{\ell}}\sum_{\kappa=0}^{d-1} (-1)^\kappa\binom{d-1}{\kappa} \binom{|\log h|+d-2-\kappa-(\ell_{J_1}+\ldots+\ell_{J_k})}{d-1-k}, \nonumber \\
 &+\sum_{l=0}^{d-2}  \sum_{\bm{\ell}\in \mathscr{L}^J_{h,l,k}}a_{J,\bm{\ell}}\sum_{\kappa=0}^{l} (-1)^\kappa\binom{d-1}{\kappa} \binom{|\log h|+d-2-\kappa-(\ell_{J_1}+\ldots+\ell_{J_k})}{d-1-k}, \label{lem:8:eq:3}
\end{align}
We have introduced the set of levels
\[
\tilde{\mathscr{L}}^J_{h,l,k} := \Bigl\{\bm{\ell}\in \mathbb{N}^d ~~| ~~ \sum_{i\in J} |\ell_{i}|\leq  |\log h|+k-1-l, ~~ \ell_{J_j}\geq 1, ~~ \text{for} ~ j=1,\ldots,k  \Bigr\}.
\]
In \cref{lem:8:eq:4}, we have rearranged the sum on the levels, by summing in the $J$ directions and letting the remaining directions constraint by the $l^1$-norm equality. We have rearranged again the sum on the levels into two terms: whether if the level appears in each term $0\leq l \leq d-1$ of the sum in \cref{lem:8:eq:4}, which correspond to the first term in \cref{lem:8:eq:3}; or not, which correspond to the second one. 

According to \cref{lem:3}, the first term in \cref{lem:8:eq:3} is equal to zero. This correpsonds to coarse contributions that are cancelled in the combination technique. Substituting $(\ell_{J_1}+\ldots+\ell_{J_k})$ by its expression in \cref{lem:8:eq:3}, we obtain the result for $1\leq k \leq d-1$.
\end{proof}

\begin{proof}[Proof of \cref{lem:6}]
The local interpolation error can be written as
\begin{align}
(\operatorname{I}-\Pi_{p,h}^{(C)})u &= \Big(\operatorname{I}-\sum_{\bm{\ell}\in \mathscr{L}_h}c_{\bm{\ell}}\Pi_{p,h_{\bm{\ell}}}\Bigr)u \nonumber\\
&= \sum_{\bm{\ell}\in \mathscr{L}_h}c_{\bm{\ell}}(\operatorname{I}-\Pi_{p,h_{\bm{\ell}}})u \label{lem:6:eq:2} \\
&=  \sum_{\bm{\ell}\in \mathscr{L}_h}c_{\bm{\ell}} \sum_{k=1}^d   \sum_{\substack{J\subset \{1,\ldots,d\}\\ |J|=k}}  (-1)^k (\operatorname{I}-\Pi_{p,h_{\bm{\ell}}})^{J}u \label{lem:6:eq:3} \\
&= \sum_{k=1}^{d-1}  \sum_{l=1}^{d-2}  C_4(d,k,l) (-1)^{k-1}  \sum_{\substack{J\subset \{1,\ldots,d\}\\ |J|=k}} \sum_{\bm{\ell}\in \mathscr{L}_{h,l,k}^J} (\operatorname{I}-\Pi_{p,h_{\bm{\ell}}})^{J}u  \nonumber \\ 
&+\sum_{l=0}^{d-1} C_5(d,l) \sum_{\bm{\ell}\in \mathscr{L}_{h,l}} (\operatorname{I}-\Pi_{p,h_{\bm{\ell}}})^{\{1,\ldots,d\}}u. \label{lem:6:eq:4}
\end{align}
In the above, \cref{lem:4} has been used in \cref{lem:6:eq:2}; \cref{lem:7} has been used in \cref{lem:6:eq:3}; and \cref{lem:8} has been used in \cref{lem:6:eq:4}. Then, using the triangular inequality and applying \cref{lem:5} to each of the terms, we obtain:
\begin{align}
\left\|(\operatorname{I}-\Pi_{p,h}^{(C)})\,u  \right\|_{H^r_{\mathrm{mix}}(\hat{\Omega})} &\leq \sum_{k=1}^{d-1}  \sum_{l=0}^{d-2}  C_6(d,r,k,l)   \sum_{\substack{J\subset \{1,\ldots,d\}\\ |J|=k}}\sum_{\bm{\ell}\in \mathscr{L}_{h,l,k}^J}  \Big( \prod_{i\in J}h_{\ell_i}^{q-r}\Bigr) \|u\|_{H^q_{\mathrm{mix}}(\hat{\Omega})} \nonumber \\
&+ \sum_{l=0}^{d-1} C_7(d,r,l)\sum_{\bm{\ell}\in \mathscr{L}_{h,l}}  \Big( \prod_{i\in \{1,\ldots,d\}}h_{\ell_i}^{q-r}\Bigr) \|u\|_{H^q_{\mathrm{mix}}(\hat{\Omega})}, 
\label{lem:6:eq:5}
\end{align} 
where the constants are
\[
C_6(d,r,k,l) = C_3(d,k,l) C_2(d,k,q,r) , \quad C_7(d,r,l) = C_4(d,l) C_2(d,d,q,r).
\]
The number of elements in each sum is given by
\begin{align*}
&\sum_{\substack{J\subset \{1,\ldots,d\}\\ |J|=k}} 1 = \binom{d}{k}, \quad \sum_{\bm{\ell}\in \mathscr{L}_{h,l,k}^J} 1 = \binom{|\log h|+k-2-l}{k-1}, \quad \sum_{\bm{\ell}\in \mathscr{L}_{h,l}} 1=\binom{|\log h|+d-2-l}{d-1}.
\end{align*}
Since the last two sums are maximal for $l=0$, \cref{lem:6:eq:5} can be bounded by
\begin{align*}
\left\|\left(I-\Pi_{p,h}^{(C)}\right)u  \right\|_{H^r_{\mathrm{mix}}(\hat{\Omega})} &\leq  \sum_{k=1}^{d-1}C_8(d,q,r,k)\binom{|\log h|+k-2}{k-1}h^{q-r}\|u\|_{H^q_{\mathrm{mix}}}\\
&+C_9(d,q,r)\binom{|\log h|+d-2}{d-1}h^{q-r}\|u\|_{H^q_{\mathrm{mix}}},
\end{align*}
where
\[
C_8(d,q,r,k) =\binom{d}{k} \sum_{l=0}^{d-2}2^{-(q-r)(k-1-l)}C_5(d,r,k,l), \quad C_9(d,q,r)= \sum_{l=0}^{d-2}2^{-(q-r)(d-1-l)}C_6(d,r,l).
\]
Eventually, we conclude with the bound
\[
 \binom{|\log h|+d-2}{d-1}\leq  \frac{(d-1)^{d-1}}{(d-1)! } |\log h|^{d-1},
\]
so that the constant does not depend on $p$.
\end{proof}

\begin{proof}[Proof of \cref{lem:11}]
Like in the proof of \cref{lem:5}, we demonstrate the result for $d=2$, as the extension to higher dimensions uses the same arguments. Let $0\leq s,m \leq q$ be two integers. The hierarchical increment verifies:
\[
w_{p,h_{\bm{\ell}}}\in W_{p,h_{\bm{\ell}}}(\hat{\Omega})\subset H^p(\hat{\Omega}) \subset H^q(\hat{\Omega}), 
\] 
so that we have:
\begin{align*}
&\frac{\partial^s}{\partial y^s}w_{p,h_{\bm{\ell}}}(\cdot,y)\in H^m(0,1), \quad \forall y\in(0,1), \\
&w_{p,h_{\bm{\ell}}}(x,\cdot)\in H^s(0,1), \quad \forall x\in(0,1).
\end{align*}
Therefore, we can demonstrate the bounds
\begin{align}
\Bigl\|\frac{\partial^{m}}{\partial x^{m}}\frac{\partial^{s}}{\partial y^{s}}w_{p,h_{\bm{\ell}}}\Bigr\|_{L^2(\hat{\Omega})}^2 &= \int_0^1 \Bigl| \frac{\partial^{s}}{\partial y^{s}}w_{p,h_{\bm{\ell}}}(\cdot,y)\Bigr|^2_{H^m(0,1)}dy \nonumber \\
&\leq (2\sqrt{3})^{2m} h_{\ell_1}^{-2m}\int_0^1\int_0^1 \Bigl( \frac{\partial^{s}}{\partial y^{s}}w_{p,h_{\bm{\ell}}}(x,y)\Bigr)^2dxdy \label{lem:11:eq:2}  \\
&\leq (2\sqrt{3})^{2m} h_{\ell_1}^{-2m}\int_0^1 \Bigl|w_{p,h_{\bm{\ell}}}(x,\cdot)\Bigr|^2_{H^s(0,1)}dx \nonumber \\
&\leq (2\sqrt{3})^{2(m+s)} h_{\ell_1}^{-2m} h_{\ell_2}^{-2s} \|w_{p,h_{\bm{\ell}}}\|_{L^2(\hat{\Omega})}^2\label{lem:11:eq:4}.
\end{align}
We have used \cref{lem:12} twice: in \cref{lem:11:eq:2} and in \cref{lem:11:eq:4}.
\end{proof}

\begin{proof}[Proof of \cref{lem:10}]
The following inequalities are true:
\begin{align}
\Bigl\|\frac{\partial^{s_1}}{\partial x_1^{s_1}}\ldots\frac{\partial^{s_d}}{\partial x_d^{s_d}}u_{p,h}^{(1)}\Bigr\|_{L^2(\hat{\Omega})}& \leq \sum_{\bm{\ell}\in \mathscr{H}_h} \Bigl\|\frac{\partial^{s_1}}{\partial x_1^{s_1}}\ldots\frac{\partial^{s_d}}{\partial x_d^{s_d}}w_{p,h_{\bm{\ell}}}\Bigr\|_{L^2(\hat{\Omega})}  \label{lem:10:eq:1}\\
&\leq \sum_{\bm{\ell}\in \mathscr{H}_h}(2\sqrt{3})^{|\bm{s}|_1}  \Bigl(\prod_{i=1}^d h_{\ell_i}^{-s_i}\Bigr)\|w_{p,h_{\bm{\ell}}}\|_{L^2(\hat{\Omega})}\label{lem:10:eq:2} \\
&\leq (2\sqrt{3})^{dq} \Bigl( \sum_{\bm{\ell}\in \mathscr{H}} \prod_{i=1}^d h_{\ell_i}^{-2s_i}\Bigr)^{1/2} \|u_{p,h}^{(1)}\|_{L^2(\hat{\Omega})}. \label{lem:10:eq:3}
\end{align}
We have used the triangular inequality in \cref{lem:10:eq:1}; applied \cref{lem:11} in \cref{lem:10:eq:2}; and used Cauchy-Schwarz inequality in \cref{lem:10:eq:3}. We then have the relation
\begin{align}
\label{lem:10:eq:4}
\sum_{\bm{\ell}\in \mathscr{H}_h} \prod_{i=1}^d h_{\ell_i}^{-2s_i} \leq h^{-2q} 2^{2(d-1)}\Bigr(\sum_{\bm{\ell}\in \mathscr{H}_h} 1\Bigl),
\end{align}
and we bound the last term of the product by counting the number of elements in $ \mathscr{H}$:
\begin{align}
\label{lem:10:eq:5}
\sum_{\bm{\ell}\in \mathscr{H}_h} 1=\sum_{\substack{\bm{k}\in \mathbb{N}^d\\ |\bm{k}|_1\leq |\log h|}} 1\leq \dbinom{|\log h|+d}{d}.
\end{align}
Gathering \cref{lem:10:eq:4} and \cref{lem:10:eq:5} together, and since $\lambda_p\geq 1$, we obtain:
\begin{align}
\label{lem:10:eq:6}
\sum_{\bm{\ell}\in \mathscr{H}_h} \prod_{i=1}^d h_{\ell_i}^{-2s_i}  \leq  h^{-2q} 2^{2(d-1)}\dbinom{|\log h|+d-1}{d} \leq h^{-2q} \log |h|^{d} \frac{2^{2(d-1)} 2^d}{d!}.
\end{align}
Eventually, we substitute the side of \cref{lem:10:eq:6} relation into \cref{lem:10:eq:3} and bound all the $(q+1)^d$ in the $H^q_{\mathrm{mix}}$ full norm to get:
\[
\Bigl\|u_{p,h}^{(1)}\Bigr\|_{H^q_{\mathrm{mix}}(\hat{\Omega})}\leq (q+1)^{d/2} (2\sqrt{3})^{dq}\frac{2^{(d-1)} 2^{d/2}}{d! } h^{-q} \log |h|^{d/2}.
\]
\end{proof}

\begin{proof}[Proof of \cref{lem:9}]
The proof uses similar arguments than \cite{ciarlet72}, Lemma 3, and is based on the multivariate Fa\'a di Bruno decomposition formula. Let $\bm{\alpha}$ be such that $|\bm{\alpha}|_\infty=q$. Using the Fa\'a di Bruno formula, the triangular inequality, the relation $\|f \cdot(\prod_j g_j)\|_{L^2}\leq \|f\|_{L^2}\prod_j \|g_j\|_{L^\infty} $, and putting together the
factors which correspond to identical values of indices of derivatives of $\bm{F}_{h_0}$, we obtain: 
\begin{align}
\|D^{\bm{\alpha}} &(u\circ \bm{F}_{h_0})\|_{L^2(\hat{\Omega})} \nonumber\\
&\leq C(q) \sum_{m=0}^{dq}\|D^{m} u(\bm{F}_{h_0})\|_{L^2(\hat{\Omega})} \sum_{\bm{i}\in I(m,q,\bm{\alpha})} \prod_{j=1}^q  \|\bm{\nabla}^{j} \bm{F}_{h_0}\|^{i_j}_{L^\infty(\hat{\Omega})} \nonumber\\
&\leq C(q) \|\det \bm{\nabla} \bm{F}_{h_0}^{-1}\|^{1/2}_{L^\infty(\Omega)} \sum_{m=0}^{dq}\|\bm{\nabla}^{m} u\|_{L^2(\Omega)} \sum_{\bm{i}\in I(m,q,\bm{\alpha})} \prod_{j=1}^q  \|\bm{\nabla}^{j} \bm{F}_{h_0}\|^{i_j}_{L^\infty(\hat{\Omega})}.\label{lem:9:eq:4} 
\end{align}
In the above, we have introduced the following index set: 
\[
 I(m,q,\bm{\alpha}) = \{\bm{i}\in \mathbb{N}^q ~~ | ~~ |\bm{i}|_1=m , ~~ 1\cdot i_1 + 2\cdot i_2 +\ldots+q \cdot i_q = |\bm{\alpha}|_1\}.
\]
\cref{lem:9:eq:4} is obtained with a change of variable. Then, using the H\"older inequality, and gathering all the terms in a constant $C_{12}$, we get:
\[
\|u\circ \bm{F}_{h_0}\|_{H^q_{\mathrm{mix}}(\hat{\Omega})} \leq \sum_{m=0}^{dq} C_{12}(m,q,\|\bm{\nabla F}_{h_0}\|_{W^{q,\infty}(\hat{\Omega})}, \|\det \bm{\nabla} \bm{F}_{h_0}^{-1}\|_{L^\infty(\Omega)}) |u|_{H^m(\Omega)}.
\]
We obtain \cref{lem:9:eq:1} by applying the same arguments to $\|D^{\bm{\alpha}} u\|_{L^2(\hat{\Omega})}=\|D^{\bm{\alpha}} (u\circ \bm{F}_{h_0}\circ \bm{F}_{h_0}^{-1})\|_{L^2(\hat{\Omega})}$, with $|\bm{\alpha}|_1=q$.
\end{proof}

\begin{proof}[Proof of \cref{thm:1}]
For simplicty of notations, we omit the parameters dependence of the constants. The interpolation error in the $H^r(\Omega)$ seminorm is bounded by:
\begin{align}
&\left|(\operatorname{I}-\Pi_{p,h}^{(C)})\,u\right|_{H^r(\Omega)} = \left|\Bigl[(\operatorname{I}-\Pi_{p,h}^{(C)})\,(u\circ \bm{F})\Bigr]\circ\bm{F}^{-1}\right|_{H^r(\Omega)} \nonumber \\
&\leq  \Bigl(\max_{0\leq m \leq dq} C_{12}\Bigr)  \sum_{m=0}^r \left|(\operatorname{I}-\Pi_{p,h}^{(C)})\,(u\circ \bm{F}_{h_0})\right|_{H^m(\Omega)}\label{thm:1:eq:2}  \\
&\leq  \Bigl(\max_{0\leq m \leq dq} C_{12}\Bigr) \sqrt{r}  \left\|(\operatorname{I}-\Pi_{p,h}^{(C)})\,(u\circ \bm{F}_{h_0})\right\|_{H^r(\Omega)} \label{thm:1:eq:3} \\
&\leq  \Bigl(\max_{0\leq m \leq dq} C_{12}\Bigr)  \sqrt{r}  \left\|(\operatorname{I}-\Pi_{p,h}^{(C)})\,(u\circ \bm{F}_{h_0})\right\|_{H^r_{\mathrm{mix}}(\Omega)}\label{thm:1:eq:4}  \\
&\leq  \Bigl(\max_{0\leq m \leq dq} C_{12}\Bigr)  C_{10} \sqrt{r}\, h^{q-r} |\log h|^{d-1} \left\|u\circ \bm{F}_{h_0}\right\|_{H^q_{\mathrm{mix}}(\Omega)} \label{thm:1:eq:5} \\
&\leq  \Bigl(\max_{0\leq m \leq dq} C_{12}\Bigr) C_{10} \sqrt{r}\, h^{q-r} |\log h|^{d-1} \sum_{m=0}^{dq} C_{13}\left|u\right|_{H^{m}(\Omega)}\label{thm:1:eq:6}  \\
&\leq \Bigl(\max_{0\leq m \leq dq} C_{12}\Bigr) C_{10} \Bigl(\max_{0\leq m \leq dq} C_{13}\Bigr)\sqrt{dqr}\, h^{q-r} |\log h|^{d-1}\left\|u\right\|_{H^{dq}(\Omega)}\label{thm:1:eq:7}. 
\end{align} 
In the above, \cref{lem:9:eq:1} of \cref{lem:9} has been used to obtain \cref{thm:1:eq:2}; Cauchy-Schwarz inequality has been used in \cref{thm:1:eq:3}; and the norm inequalities \cref{eq:13} has been used in \cref{thm:1:eq:4}. Then, we used \cref{lem:6} to obtain \cref{thm:1:eq:5} and we apply  \cref{lem:9:eq:2} of \cref{lem:9} to get \cref{thm:1:eq:6}. Eventually applying again the Cauchy-Schwarz inequality we obtain the result of \cref{thm:1:eq:7}. 
\end{proof}

\begin{proof}[Proof of \cref{thm:3}]
For simplicty of notations, we omit the parameters dependence of the constants. Let $u_{p,h}^{(1)}\in \tilde{S}_{p,h}^{q,(1)}(\Omega)$, then its $H^q(\Omega)$ seminorm error is bounded by:
\begin{align}
|u_{p,h}^{(1)}\circ |_{H^{q}(\Omega)} &= |(u_{p,h}^{(1)}\circ \bm{F}_{h_0})\circ \bm{F}^{-1}|_{H^{q}(\Omega)} \nonumber \\
&\leq \sum_{m=0}^q C_{12} |u_{p,h}^{(1)}\circ \bm{F}_{h_0}|_{H^{m}(\hat{\Omega})} \label{thm:3:eq:2} \\
&\leq \sqrt{q} C_{12} \|u_{p,h}^{(1)}\circ \bm{F}_{h_0}\|_{H^{q}(\hat{\Omega})}  \label{thm:3:eq:3}\\
&\leq \sqrt{q} C_{12} \|u_{p,h}^{(1)}\circ \bm{F}_{h_0}\|_{H^{q}_{\mathrm{mix}}(\hat{\Omega})}  \label{thm:3:eq:4}\\
& \leq \sqrt{q} C_{12}C_{11} h^{-q} |\log h|^{d/2} \|u_{p,h}^{(1)}\circ \bm{F}\|_{L^2(\hat{\Omega})}  \label{thm:3:eq:5} \\
& \leq \sqrt{q} C_{12}C_{11}C_{13} h^{-q} |\log h|^{d/2} \|u_{p,h}^{(1)}\|_{L^2(\Omega)}  \label{thm:3:eq:6} 
\end{align} 
In the above, \cref{lem:9:eq:1} of \cref{lem:9} has been used to obtain \cref{thm:3:eq:2}; Cauchy-Schwarz inequality has been used in \cref{thm:3:eq:3}; and the norm inequalities \cref{eq:13} has been used in \cref{thm:3:eq:4}. Then, we used \cref{lem:10} to obtain \cref{thm:3:eq:5} and eventually we apply  \cref{lem:9:eq:2} of \cref{lem:9} with $q$=0 to get \cref{thm:3:eq:6}.
\end{proof}

\begin{proof}[Proof of \cref{prop:1}]
The dimension of the physical domain approximation space coincides with that of the parameter domain space. To prove the result, we choose the sparse-grid combination technique formulation and count the number of basis functions involved in the combination:
\begin{align}
\label{prop:1:eq:1}
\dim S_{p,h}^{(C)}(\hat{\Omega})  \leq \sum_{l=0}^{d-1}\Bigl|(-1)^l\Bigr|\left|\binom{d-1}{l}\right|\sum_{\substack{|\bm{\ell}|_1=|\log h|+d-1-l, \\ \ell_j \geq 1}}\dim S_{p,h_{\bm{\ell}}}(\hat{\Omega}).
\end{align}
Since all of the basis functions of $S_{p,h_{\bm{\ell}}}(\hat{\Omega})$, for a given level $\bm{\ell}\in \mathbb{N}^d$, are linearly independent we have:
\begin{align}
\label{prop:1:eq:2}
\dim S_{p,h_{\bm{\ell}}}(\hat{\Omega})  = \prod_{l=1}^d (h_{\ell_l}^{-1}+p).
\end{align}
Substituting \cref{prop:1:eq:2} into \cref{prop:1:eq:1}, we obtain:
\[
\dim S_{p,h}^{(C)}(\hat{\Omega})\leq h^{-1} \sum_{l=0}^{d-1}\binom{d-1}{l}\binom{|\log h|+d-2-l}{d-1}(1+p)^d2^{-(d-1-l)}.
\]
We conclude with the estimation
\[
\binom{|\log h|+d-2-l}{d-1} \lesssim |\log h|^{d-1}.
\]
\end{proof}

\end{document}